\newtheorem{theorem}{Theorem}[section]
\newtheorem{pro}[theorem]{Proposition}
\newtheorem{lem}[theorem]{Lemma}
\newtheorem{rem}[theorem]{Remark}
\newtheorem{cor}[theorem]{Corollary}
\newtheorem{dnt}[theorem]{Definition}
\def\M{\mathcal{M}}
\def\Core{\hbox{\rm Core}}
\def\Aut{\hbox{\rm Aut}}
\def\Mon{\hbox{\rm Mon}}
\def\di{~\big | ~}
\def\a{\alpha}
 \def\O{\Omega}   \def\o{\omega}\def\G{\varGamma}
\def\ol1{\overline 1}
\def\di{\bigm|}
\numberwithin{equation}{section}
\begin{document}

\title{
Monodromy representation of graphs
}

\date{}

\author{Kai Yuan\footnote{Corresponding author.  Supported by the National Natural Science Foundation of China (No. 12101535). \newline
{\it E-mail address}: pktide@163.com (Kai Yuan), wang$_{-}$yan@pku.org.cn (Yan Wang).}, Yan Wang\\
{\small School of Mathematics and Information Science, Yantai University, } \\ {\small Yantai 264005, China}
}

\maketitle

\begin{abstract}
It is well-known that every vertex-transitive graph admits a representation as a  coset graph. In this paper, we extend this construction by introducing monodromy graphs defined through double cosets. Our main result establishes that every graph is isomorphic to a monodromy graph, providing a new combinatorial framework for graph representation. Moreover, we show that every  graph gives rise to an arc-transitive graph through its monodromy representation. Inspired by the monodromy representation of graphs, we denote an algebraic map $\mathcal{M}(G;\O,\rho,\tau)$ by $\mathcal{M}(G;U,\rho,\tau)$ where $U$ is a stabiliser in $G$. As an application, we prove an enumeration theorem for orientable maps with a given monodromy group. We underscore a fundamental triad in algebraic graph theory: Where there is a graph, there is a group, an arc-transitive graph, and an orientable regular map--each arising canonically from the underlying combinatorial and algebraic structures. 
\end{abstract}

{\small {\em 2000 Mathematics Subject Classification}:\,
              05C25, 05C30, 05E16, 05C62.

             {\em Keywords}: Graphs; Monodromy graphs; Double cosets; Graph representations; Arc-transitive graphs; Regular maps}

\section{Introduction}
Graphs considered in this paper are connected and may have loops, multiple edges, and free edges\cite{JS}. A map is an embedding of a graph on surfaces. Modern foundations of the theory of maps on orientable surfaces can be found in Jones and Singerman\cite{JS}.

We use the idea (due to Heffter and more recently attributed to Edmonds) of associating with a map $\mathcal{M}$ on an orientable surface $\mathcal{S}$ a pair of permutations $\tau, \rho$ of the set $\Omega$ of darts (directed edges) of $\mathcal{M}$: $\tau$ is
 the involution which reverses the direction of each dart and $\rho$ cyclically
 permutes the darts directed towards each vertex $v$ by following the
 orientation around $v$.

The representation of graphs through group-theoretic constructions has been a fundamental tool in algebraic graph theory. While  coset graphs provide a natural representation for vertex-transitive graphs\cite{S}, the general case for arbitrary connected simple graphs has remained less explored. This paper is also motivated by  the rotation systems in topological graph theory\cite{GT}. In this paper, we introduce the concept of monodromy graphs and demonstrate their universality for representing graphs.

In recent decades, the interplay between graph theory, group theory, and topological combinatorics has led to profound insights into the structure and symmetry of discrete objects. 
A central theme is the representation of graphs as coset graphs, which naturally captures vertex-transitive symmetry. 
However, many graphs are not vertex-transitive, and a more general framework is needed to encode their structure in group-theoretic terms. 
This paper addresses this gap by introducing monodromy graphs, a construction based on double cosets that generalizes the classical coset graph representation.

Our main contribution is to show that every graph—including those with loops, multiple edges, or free edges—is isomorphic to a monodromy graph. 
This provides a unified combinatorial framework for graph representation that extends beyond the vertex-transitive case. 
Moreover, we demonstrate that every graph gives rise to an arc-transitive graph via its monodromy representation, highlighting a natural pathway from arbitrary graphs to highly symmetric structures.

Inspired by the monodromy representation, we also revisit the algebraic theory of maps. 
We denote an algebraic map by $\mathcal{M}(G;U,\rho,\tau)$, where $U$ is a stabilizer in $G$, and establish a correspondence between maps and their monodromy graphs. 
As an application, we prove an enumeration theorem for orientable maps with a given monodromy group, illustrating the utility of our approach in combinatorial enumeration.

A key philosophical insight underpinning our work is the existence of a fundamental triad in algebraic graph theory:
\begin{quote}
\emph{Where there is a graph, there is a group, an arc-transitive graph, and an orientable regular map—each arising canonically from the underlying combinatorial and algebraic structures.}
\end{quote}
This triad not only unifies several classical concepts but also opens new avenues for studying graphs via their associated symmetric counterparts.

The paper is organized as follows. 
In Section~2, we review group actions, double cosets, and algebraic maps. 
Section~3 introduces monodromy graphs and proves their universality. 
Section~4 studies automorphisms of monodromy graphs and their relation to arc-transitive graphs. 
Section~5 presents an enumeration theorem for maps with a given monodromy group. 
Finally, in Section~6, we illustrate our theory with examples of Platonic solids, using computational support from Magma~\cite{BCP}.

\section{Group actions and maps}

For any positive integer $n$, $[n]$ denotes the set $\{1, 2, \ldots , n\}$.

\subsection{Group actions}

Let \(G\) and \(H\) be groups acting on the sets \(\Omega\) and \(\Delta\), respectively. The two actions are said to be \emph{permutationally isomorphic} if there exist a bijection \(\vartheta\colon\Omega\to\Delta\) and an isomorphism \(\chi\colon G\to H\) such that

\[\omega^{g\vartheta}=\omega^{\vartheta(g^\chi)}\quad\text{for all}\quad\omega\in \Omega,\;g\in G.\]

If such conditions hold, the pair \((\vartheta,\chi)\) is said to be a \emph{permutational isomorphism}.

\begin{pro}{\rm \cite[Lemma 2.8]{PS}}\label{iso}
Suppose that \(G\) acts on a set \(\Omega\) transitively and let \(\omega \in \Omega\). Define the map \(\gamma \colon \Omega \to [G : G_{\omega}]\) by the rule \(\omega^{\prime} \mapsto G_{\omega}g\) where \(g \in G\) is chosen such that \(\omega^g = \omega^{\prime}\). Then \(\gamma\) is a well defined bijection, and \((\gamma, \operatorname{id}_{G})\) is a permutational isomorphism.
\end{pro}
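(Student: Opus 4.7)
The plan is to verify three things in turn: well-definedness of $\gamma$, that $\gamma$ is a bijection, and that the pair $(\gamma,\operatorname{id}_G)$ satisfies the compatibility condition defining a permutational isomorphism, using the standard right-multiplication action of $G$ on $[G:G_\omega]$ given by $(G_\omega h)^g = G_\omega hg$.

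First I would handle well-definedness. Given $\omega'\in\Omega$, transitivity of $G$ on $\Omega$ guarantees at least one $g$ with $\omega^g=\omega'$, so the rule is non-vacuous; if $g_1,g_2$ both satisfy $\omega^{g_1}=\omega^{g_2}=\omega'$, then $\omega^{g_2g_1^{-1}}=\omega$, hence $g_2g_1^{-1}\in G_\omega$, giving $G_\omega g_1=G_\omega g_2$. Thus $\gamma$ depends only on $\omega'$ and not on the choice of $g$.

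Next I would prove bijectivity. Injectivity is essentially the converse of the above: if $\gamma(\omega'_1)=G_\omega g_1=G_\omega g_2=\gamma(\omega'_2)$, then $g_2g_1^{-1}\in G_\omega$, so $\omega^{g_1}=\omega^{g_2}$, i.e.\ $\omega'_1=\omega'_2$. Surjectivity is immediate: every coset has the form $G_\omega g$ for some $g\in G$, and then $\gamma(\omega^g)=G_\omega g$.

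Finally I would verify the permutational isomorphism condition. Writing $\omega'=\omega^h$ for a chosen $h\in G$, so that $\gamma(\omega')=G_\omega h$, we compute $\gamma((\omega')^g)=\gamma(\omega^{hg})=G_\omega hg=(G_\omega h)^g=\gamma(\omega')^g$ for every $g\in G$, which is precisely the required compatibility with $\chi=\operatorname{id}_G$. I do not expect a genuine obstacle here: the only mild care needed is to fix the convention for the coset action (right multiplication, matching the right action $\omega\mapsto\omega^g$) and to handle both directions of the chain $\omega^{g_1}=\omega^{g_2}\Longleftrightarrow G_\omega g_1=G_\omega g_2$ uniformly so that well-definedness and injectivity are treated as two faces of the same equivalence.
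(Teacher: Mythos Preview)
Your proof is correct and is exactly the standard argument; note, however, that the paper does not supply its own proof of this proposition but simply cites it from \cite[Lemma~2.8]{PS}. What you have written is essentially the proof one finds in that reference, so there is nothing further to compare.
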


\begin{pro} {\rm \cite{BH}} \label{dbc}
  Let \( U \) and \( V \) be subgroups of \( G \). The double cosets of \( G \) with respect to \( U \) and \( V \) are the sets
\[ UgV = \{ ugv \mid u \in U, v \in V \} \quad (\text{with } g \in G). \]
If \( UgV \cap UhV \neq \emptyset \), then \( UgV = UhV \). So the double cosets give rise to a partition of \( G \). If \( G \) is finite, then  
\[ G = \bigcup_{i=1}^n Ug_iV \quad (\text{disjoint}) \]
and  
\[ |G| = \sum_{i=1}^n \frac{|U||V|}{|U^{g_i} \cap V|}, \]
where \( U^{g_i} = g_i^{-1} U g_i \). In particular,  the number of right cosets of $U$ in $G$ that lie in $Ug_iV$ is $\frac{|V|}{|U^{g_i} \cap V|}$.
 \end{pro}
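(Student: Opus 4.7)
The plan is to deduce all three statements from a single group action: let $U\times V$ act on $G$ by $(u,v)\cdot x=uxv^{-1}$. The orbit of $g$ under this action is exactly $UgV$, so the double cosets are the orbits of a group action and therefore partition $G$. This settles the first claim: if $UgV\cap UhV\neq\emptyset$, the two sets are the same orbit. (If one prefers a direct argument, a single line suffices: $u_1gv_1=u_2hv_2$ gives $g\in UhV$, hence $UgV\subseteq UhV$, and symmetrically.)

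Next, compute the stabilizer of $g$. The pair $(u,v)$ fixes $g$ iff $u=gvg^{-1}$. Under the paper's convention $U^g=g^{-1}Ug$, this happens iff $v\in g^{-1}Ug=U^g$, and $u$ is then determined by $v$. Hence projection on the second coordinate gives an isomorphism from the stabilizer onto $V\cap U^g$. Orbit–stabilizer yields
\[
|UgV|=\frac{|U\times V|}{|V\cap U^g|}=\frac{|U||V|}{|U^g\cap V|},
\]
and summing over a system $\{g_1,\ldots,g_n\}$ of double coset representatives gives the stated formula for $|G|$.

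For the final clause, observe that $UgV=\bigcup_{v\in V}Ugv$, so the right cosets of $U$ contained in $UgV$ are precisely the sets $Ugv$ for $v\in V$. Two such cosets satisfy $Ugv_1=Ugv_2$ iff $gv_1v_2^{-1}g^{-1}\in U$, i.e.\ iff $v_1v_2^{-1}\in g^{-1}Ug\cap V=U^g\cap V$. Therefore the distinct right cosets are indexed by the (right) cosets of $U^g\cap V$ in $V$, giving exactly $|V|/|U^g\cap V|$ of them. (As a sanity check, multiplying this count by $|U|$ recovers $|UgV|$.)

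The only technical hazard is keeping left/right and conjugation conventions consistent — in particular, matching $U^g=g^{-1}Ug$ throughout so that the stabilizer in the $U\times V$ action and the coset-equality condition in the last step both produce the same intersection $U^g\cap V$. Once the conventions are fixed, everything reduces to one application of orbit–stabilizer and one coset calculation, with no further obstacle.
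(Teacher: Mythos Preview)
Your proof is correct. The paper does not supply its own proof of this proposition; it is stated with a citation to Huppert's \emph{Endliche Gruppen I} as a standard background result, so there is nothing to compare against beyond noting that your orbit--stabilizer argument via the $U\times V$-action $(u,v)\cdot x=uxv^{-1}$ is the canonical route and matches the treatment in the cited source.
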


\begin{lem}\label{suborb}
Let \( G \) act transitively on \(\Omega\), let \( G_\o \) be the stabilizer subgroup of $\o\in \O$ and $H$ a subgroup of $G$. Suppose
\[G = \bigcup_{i=1}^t G_\o g_i H\]
is the double coset decomposition of \( G \) with respect to the subgroups $ G_\o$ and $H$, where $g_i\in G, 1\leq i\leq t$ and \( g_1 \in G_\o \) in particularly. Then
    \(\Delta_i = \{\o^g \mid g \in G_\o g_i H\}\), for \( i = 1, \ldots, t \), are precisely all the orbits of \( H\) on \(\Omega\).

\end{lem}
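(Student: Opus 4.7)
The plan is to exploit the natural bijection from Proposition \ref{iso} identifying $\Omega$ with the right coset space $[G:G_\omega]$, under which the $G$-action on $\Omega$ becomes right multiplication on cosets. Under this identification, the element $\omega^g \in \Omega$ corresponds to the right coset $G_\omega g$, and the $H$-action on $\Omega$ corresponds to the restriction of right multiplication to $H$. The $H$-orbit of $G_\omega g$ is then $\{G_\omega g h : h \in H\}$, whose union of cosets is precisely the double coset $G_\omega g H$. So the partition of $\Omega$ into $H$-orbits corresponds exactly to the partition of $G$ into double cosets $G_\omega \backslash G / H$.

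To make this concrete without relying on the coset reformulation, I would verify the following in sequence. First, each $\Delta_i$ is contained in a single $H$-orbit: given $g = u g_i h \in G_\omega g_i H$, compute $\omega^g = \omega^{u g_i h} = (\omega^{g_i})^h$, so every point of $\Delta_i$ lies in the $H$-orbit of $\omega^{g_i}$. Second, conversely the entire $H$-orbit of $\omega^{g_i}$ is contained in $\Delta_i$, since each $g_i h$ manifestly lies in $G_\omega g_i H$. Hence $\Delta_i$ coincides with the $H$-orbit of $\omega^{g_i}$.

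Third, every $H$-orbit arises this way: by transitivity, an arbitrary $\omega' \in \Omega$ can be written as $\omega^g$ for some $g \in G$, and $g$ lies in exactly one double coset $G_\omega g_i H$, placing $\omega'$ in $\Delta_i$. Fourth, the $\Delta_i$ are pairwise disjoint: if $\omega^{g_i}$ and $\omega^{g_j}$ lie in a common $H$-orbit, then $\omega^{g_j} = \omega^{g_i h}$ for some $h \in H$, which yields $g_j h^{-1} g_i^{-1} \in G_\omega$, hence $g_j \in G_\omega g_i H$, forcing $i = j$ by the disjointness of the double coset decomposition. The remark about $\omega \in \Delta_1$ follows since $g_1 \in G_\omega$ gives $\omega^{g_1} = \omega$.

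There is no real obstacle here; the proof is essentially a bookkeeping translation of the double coset decomposition of $G$ into the orbit decomposition of $\Omega$ via Proposition \ref{iso}. The only point requiring a sliver of care is the bidirectional verification in the first two steps (well-definedness of $\Delta_i$ independent of the chosen representative $g_i$ and the identification of $\Delta_i$ as a full orbit rather than a union of orbits), but both reduce immediately to the identity $\omega^{u g_i h} = (\omega^{g_i})^h$ for $u \in G_\omega$.
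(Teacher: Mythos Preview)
Your proposal is correct and follows essentially the same route as the paper: the paper compresses your first two steps into the single line $\Delta_i = \o^{G_\o g_i H} = (\o^{g_i})^H$, uses transitivity of $G$ to get $\bigcup_i \Delta_i = \Omega$, and proves distinctness of the $\Delta_i$ by the same stabilizer computation you give (with the indices reversed). Your additional framing via Proposition~\ref{iso} is sound but not invoked in the paper's argument.
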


\begin{proof}
  Since $\Delta_i = \o^{G_\o g_i H} = (\o^{g_i})^{H}$, it follows that $\Delta_i$ is an orbit of $H$. By the transitivity of $G$, we have $\bigcup_{i=1}^t \Delta_i = \Omega$.

It remains to show that the $\Delta_i$ are pairwise distinct. Suppose $\Delta_i = \Delta_j$, which means $\o^{G_\o g_i H} = \o^{G_\o g_j H}$. Then there exists $h \in H$ such that $\o^{g_i h} = \o^{g_j}$, which implies $g_i h g_j^{-1} \in G_\o$.
This yields $g_i \in G_\o g_j H$, and consequently $G_\o g_i H = G_\o g_j H$, which means $i = j$.
\end{proof}

\subsection{Maps}

An \emph{algebraic map} $\mathcal{M}$ is defined as a quadruple $\mathcal{M}(G; \Omega, \rho, \tau)$, 
where $\Omega$ is a set of darts, and $\tau, \rho$ are permutations of $\Omega$ such that 
$\tau^2 = 1$ and the group $G = \langle \tau, \rho \rangle$ acts transitively on $\Omega$.
The underlying graph $\Gamma$ of $\mathcal{M}$ has vertex set $V = V(\Gamma)$ and edge set 
$E = E(\Gamma)$, and $\Omega$ corresponds to the darts of $\Gamma$. Each dart $\alpha \in \Omega$ 
is associated with a unique vertex $\text{vert}(\alpha) \in V$ and a unique edge $\text{edge}(\alpha) \in E$.

For a vertex $v \in V$, the set 
$$
\text{dart}(v) = \{ \alpha \in \Omega \mid \text{vert}(\alpha) = v \}
$$ 
consists of all darts incident to $v$, and the valency $\text{val}(v) = |\text{dart}(v)|$ satisfies 
$1 \leq \text{val}(v) < \infty$. A \emph{free point}---that is, an endpoint of a free edge (which is not 
considered a vertex)---is defined to have valency 1.

For an edge $e \in E$, the set 
$$
\text{dart}(e) = \{ \alpha \in \Omega \mid \text{edge}(\alpha) = e \}
$$ 
has size 1 if $e$ is a free edge, and size 2 otherwise.

The permutation $\tau$ acts on darts as follows:
\begin{itemize}
    \item If $\text{edge}(\alpha)$ is a loop, then $\tau$ exchanges the two darts on that loop, 
          so $\alpha^\tau \neq \alpha$ but $\text{edge}(\alpha^\tau) = \text{edge}(\alpha)$.
    \item If $\text{edge}(\alpha)$ is a free edge, then $\tau$ fixes the single dart on that edge, 
          so $\alpha^\tau = \alpha$.
\end{itemize}

\begin{pro}{\rm\cite[Lemma 2.1a]{JS}}\label{2.1a}
There is a natural bijection between $E$ and the cycles of $\tau$
on $\Omega$, with darts $\alpha, \beta \in \Omega$ lying in the same cycle
if and only if $\text{edge}(\alpha) = \text{edge}(\beta)$.
\end{pro}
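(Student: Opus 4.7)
The plan is to exploit the fact that $\tau$ is an involution, so every cycle of $\tau$ on $\Omega$ has length $1$ or $2$, and then match these cycles with the edges of $\Gamma$ by analyzing how $\tau$ acts on $\mathrm{dart}(e)$ for each $e\in E$.

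First, I would define the candidate bijection $\Phi\colon E\to\{\text{cycles of }\tau\}$ by $e\mapsto \mathrm{dart}(e)$. To see that this is well defined, consider the two cases from the definition of $\tau$. If $e$ is a free edge, then $\mathrm{dart}(e)=\{\alpha\}$ for a single dart $\alpha$ with $\alpha^\tau=\alpha$, so $\mathrm{dart}(e)$ is a $1$-cycle of $\tau$. If $e$ is a non-free edge (including loops), then $\mathrm{dart}(e)=\{\alpha,\beta\}$ with $\alpha^\tau=\beta$ and $\beta^\tau=\alpha$, so $\mathrm{dart}(e)$ is a $2$-cycle of $\tau$. In both cases, $\Phi(e)$ is a genuine cycle of $\tau$.

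Next, I would verify surjectivity. Since $\tau^2=1$, every cycle of $\tau$ has length $1$ or $2$. If $\{\alpha\}$ is a fixed point of $\tau$, the definition of $\tau$ forces $\mathrm{edge}(\alpha)$ to be a free edge (for any non-free edge the two darts would be swapped), and $\{\alpha\}=\mathrm{dart}(\mathrm{edge}(\alpha))=\Phi(\mathrm{edge}(\alpha))$. If $\{\alpha,\beta\}$ is a $2$-cycle, the definition of $\tau$ gives $\mathrm{edge}(\alpha)=\mathrm{edge}(\beta)$, and since a non-free edge has exactly two darts, $\{\alpha,\beta\}=\mathrm{dart}(\mathrm{edge}(\alpha))=\Phi(\mathrm{edge}(\alpha))$. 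Injectivity is immediate from the fact that each dart is associated with a unique edge: if $\Phi(e_1)=\Phi(e_2)$, pick any $\alpha$ in this common cycle and conclude $e_1=\mathrm{edge}(\alpha)=e_2$.

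Finally, the characterization of when two darts lie in the same cycle follows directly: $\alpha,\beta$ are in the same cycle of $\tau$ iff $\{\alpha,\beta\}\subseteq \Phi(e)=\mathrm{dart}(e)$ for some $e$, iff $\mathrm{edge}(\alpha)=\mathrm{edge}(\beta)$. There is no real obstacle here; the whole argument is bookkeeping that rests on two facts already built into the definition of an algebraic map, namely $\tau^2=1$ and the explicit description of $\tau$ on the darts of free edges, loops, and ordinary edges. The only subtlety worth flagging is the need to treat loops carefully, since a loop is a single edge whose two darts share a vertex; this case is automatically handled because loops are non-free, so $\tau$ swaps their two darts exactly as in the ordinary case.
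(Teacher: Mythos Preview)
Your argument is correct. Note, however, that the paper does not supply its own proof of this proposition: it is stated as a citation of \cite[Lemma~2.1a]{JS} and used without proof. Your write-up is exactly the standard verification one would give, matching the original argument in Jones--Singerman; there is nothing to compare against in the present paper.
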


\begin{pro}{\rm\cite[Lemma 2.1b]{JS}} \label{2.1b}
There is a natural bijection between $V$ and the cycles of $\rho$
on $\Omega$, with darts $\alpha, \beta \in \Omega$ lying in the same cycle
if and only if $\text{vert}(\alpha) = \text{vert}(\beta)$.
\end{pro}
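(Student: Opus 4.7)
The plan is to unpack the definition of $\rho$ given in the introduction and verify that it induces exactly the required bijection. Recall that $\rho$ was described as the permutation that cyclically permutes the darts directed towards each vertex $v$ following the orientation around $v$. This description already essentially encodes the conclusion, so the proof reduces to a careful translation.

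First I would observe that $\rho$ preserves vertices, i.e.\ $\text{vert}(\alpha^\rho) = \text{vert}(\alpha)$ for every $\alpha \in \Omega$. This is immediate because $\rho$ is defined locally at each vertex: it sends a dart at $v$ to the next dart at $v$ in the rotation. Hence each set $\text{dart}(v)$ is $\rho$-invariant, i.e.\ a union of cycles of $\rho$.

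Next I would strengthen this to the statement that $\rho$ restricts to a \emph{single} cycle of length $\text{val}(v)$ on $\text{dart}(v)$. This is the content of the phrase ``cyclically permutes$\ldots$ following the orientation around $v$'': applying $\rho$ successively to any dart at $v$ sweeps out all $\text{val}(v)$ darts at $v$ before returning. Combined with the disjoint decomposition $\Omega = \bigsqcup_{v \in V} \text{dart}(v)$ (each dart has a unique vertex, by the definition of $\text{vert}$), this shows that the cycles of $\rho$ on $\Omega$ are exactly the sets $\text{dart}(v)$, one per vertex.

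Finally, I would define the natural map $\Phi \colon V \to \{\text{cycles of } \rho\}$ by $\Phi(v) = \text{dart}(v)$. By the previous step, $\Phi$ is well-defined and surjective; injectivity is immediate from the disjointness of $\text{dart}(v)$ for distinct $v$. The ``if and only if'' clause then reads: $\alpha, \beta$ lie in a common cycle iff they lie in a common $\text{dart}(v)$, i.e.\ iff $\text{vert}(\alpha) = \text{vert}(\beta)$. I do not anticipate a genuine obstacle, since the proposition is essentially a translation exercise; the most delicate point is to invoke the single-cycle property (not merely the $\rho$-invariance of each $\text{dart}(v)$), but this is built into the geometric description of $\rho$ as the vertex rotation.
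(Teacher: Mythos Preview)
Your argument is correct: you identify that each $\text{dart}(v)$ is $\rho$-invariant, then use the single-cycle property built into the definition of $\rho$ as the local rotation to conclude that the cycles of $\rho$ are exactly the sets $\text{dart}(v)$, and you package this as the bijection $\Phi$. There is no gap.

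As for comparison with the paper: there is nothing to compare. The paper does not prove this proposition; it is quoted verbatim as \cite[Lemma~2.1b]{JS} and used as a black box (for instance in the proof of Theorem~3.6). Your write-up is therefore strictly more than what the paper offers, and it matches the standard argument one finds in Jones--Singerman: the cycles of $\rho$ are the vertex-orbits by construction of $\rho$ as the vertex rotation.
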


Suppose that $\a\in \O$ and $U=G_\a$. Clearly, $U$ is core free in $G$. From Proposition 6.10 in \cite{JS}, we can get the following proposition.

\begin{pro}\label{grpro}
\begin{enumerate}
    \item[{\rm (i)}] $\M$ has free edges if and only if $\tau \in U^g ~(= g^{-1}Ug)$,
    for some $g \in G$.

    \item[{\rm (ii)}] $\M$ has loops if and only if for some $g \in G$, $\tau\rho^i \in U^g$ but $\tau \notin U^g$.

    \item[{\rm (iii)}] $\M$ has multiple edges if and only if for some $g \in G$,
    $\tau\rho^i\tau\rho^j \in U^g$ $(i \neq 0, j \neq 0)$ where $\tau\rho^i\tau \notin U^g$,
    and for all $k \in \mathbb{Z}$, $\tau\rho^k \notin U^g$.
\end{enumerate}
\end{pro}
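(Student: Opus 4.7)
The plan is to use the permutational isomorphism of Proposition~\ref{iso} to pass from the dart set $\Omega$ to the right-coset space $[G:U]$. Under this identification every dart can be written as $\beta=\alpha^g$ for some $g\in G$, and its stabiliser is $G_\beta=U^g=g^{-1}Ug$, so any statement of the form ``the group element $x\in G$ fixes the dart $\beta$'' translates directly to ``$x\in U^g$''. Combining this with Propositions~\ref{2.1a} and~\ref{2.1b}, which identify edges with $\tau$-cycles and vertices with $\rho$-cycles, I obtain a dictionary between combinatorial incidences in $\mathcal{M}$ and algebraic memberships in conjugates of $U$.

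Parts~(i) and~(ii) then follow from this dictionary with little effort. For~(i), a free edge is by definition a dart fixed by $\tau$, and $\beta=\alpha^g$ is fixed by $\tau$ if and only if $\tau\in U^g$. For~(ii), a loop is an edge with two distinct darts at the same vertex, so I look for a $\beta$ with $\beta^\tau\neq\beta$ such that $\beta$ and $\beta^\tau$ lie in a common $\rho$-cycle, i.e.\ $\beta^\tau=\beta^{\rho^i}$ for some integer $i$. This translates to $\tau\notin U^g$ together with $\tau\rho^{-i}\in U^g$; after replacing $-i$ by $i$ (with $i$ running over all nonzero integers) this is the stated form, and every step is reversible.

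For~(iii) I expect the bookkeeping to be more delicate, and this is where I anticipate the main obstacle. Picking $\beta=\alpha^g$ on an edge $e_1$ and looking for a second edge $e_2$ sharing both endpoints, I let $\delta$ be the dart of $e_2$ at $\text{vert}(\beta)$; by Proposition~\ref{2.1b}, I may write $\delta=\beta^{\rho^{-j}}$ and $\delta^\tau=(\beta^\tau)^{\rho^i}$ for some integers $i,j$. Composing these and using $\tau^2=1$ yields $\tau\rho^i\tau\rho^j\in U^g$. The auxiliary conditions then acquire clear meanings: ``$\tau\rho^k\notin U^g$ for all $k$'' records that $e_1$ is neither a free edge nor a loop, so in particular $\text{vert}(\beta)\neq\text{vert}(\beta^\tau)$, while ``$\tau\rho^i\tau\notin U^g$'' records that $\delta^\tau\neq\beta^\tau$. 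The delicate point, arising in the converse direction, is to argue that $\delta\neq\beta$: this amounts to $\rho^j\notin U^g$ and is \emph{not} implied by $j\neq 0$ alone, since $\rho^j$ might still fix $\beta$. The trick I would use is a short cancellation: if $\rho^j\in U^g$, then $\tau\rho^i\tau=(\tau\rho^i\tau\rho^j)\,\rho^{-j}\in U^g$, contradicting the second hypothesis; hence $\rho^j\notin U^g$, so $\delta\neq\beta$, and $\text{edge}(\delta)$ is a genuine second edge between the two prescribed vertices.
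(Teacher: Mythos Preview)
Your proof is correct. The paper does not actually prove this proposition: it merely states that it follows from Proposition~6.10 of Jones--Singerman~\cite{JS}, so there is no argument in the paper to compare against step by step. What you have supplied is a self-contained proof that the paper omits.

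Your dictionary---darts $\beta=\alpha^g$, stabilisers $G_\beta=U^g$, edges as $\tau$-cycles (Proposition~\ref{2.1a}), vertices as $\rho$-cycles (Proposition~\ref{2.1b})---is exactly the translation mechanism one uses to pass between the combinatorial and algebraic descriptions, and it is the natural way to recover the Jones--Singerman result directly. Parts~(i) and~(ii) are handled cleanly. In part~(iii) you correctly identify the one genuinely non-obvious point: in the converse direction the hypothesis $j\neq 0$ does not by itself force $\delta\neq\beta$, since $\rho^j$ could still lie in $U^g$; your cancellation trick $\tau\rho^i\tau=(\tau\rho^i\tau\rho^j)\rho^{-j}$ closes this gap neatly. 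You also implicitly use, but could state explicitly, that $\delta\neq\beta^\tau$ and $\delta^\tau\neq\beta$ come for free because $\text{vert}(\beta)\neq\text{vert}(\beta^\tau)$, so that $e_1\neq e_2$ really does follow from $\delta\neq\beta$ alone. With that minor remark added, the argument is complete and would serve well as an inline proof replacing the bare citation.
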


We immediately deduce the following proposition from Theorem 6.11 in \cite{JS} or Proposition \ref{grpro}.
\begin{pro} \label{simple}
$\G$ is a simple graph if and only if

\begin{enumerate}
    \item[{\rm (i)}] For all $i \in \mathbb{Z}$ and for all $g \in G$, $\tau\rho^i \notin U^g$,

    \item[{\rm (ii)}] Whenever $\tau\rho^i\tau\rho^j \in U^g$ for some $i,j \in \mathbb{Z}$ then $\tau\rho^i\tau \in U^g$
    $($equivalently $\rho^j \in U^g)$.
\end{enumerate}
\end{pro}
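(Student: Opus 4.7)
The plan is to deduce the statement directly from Proposition~\ref{grpro} by observing that $\Gamma$ is simple precisely when it has no free edges, no loops, and no multiple edges, and then translating the negation of each feature into a condition on $U^g$. The argument is essentially a combinatorial reorganization of the three clauses of Proposition~\ref{grpro}, so I would aim for a short deduction rather than any new construction.

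First I would handle free edges and loops jointly. Negating Proposition~\ref{grpro}(i) yields $\tau\notin U^g$ for all $g\in G$. Negating Proposition~\ref{grpro}(ii) says that for every $g$ and every $i$, if $\tau\rho^i\in U^g$ then $\tau\in U^g$. Combining these, $\tau\rho^i\notin U^g$ for every $i\in\mathbb Z$ and every $g\in G$, which is exactly condition~(i). Conversely, condition~(i) immediately rules out both free edges (take $i=0$) and loops (using any $i$ with $\tau\rho^i\in U^g$). So (i) is equivalent to the absence of both loops and free edges.

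Next I would handle multiple edges, and here is the one place where a little care is needed. Proposition~\ref{grpro}(iii) characterizes multiple edges by the \emph{conjunction} of three clauses, including the side clause ``$\tau\rho^k\notin U^g$ for all $k$''. Once (i) is on the table, that side clause is automatic, so the absence of multiple edges reduces to: for all $g$ and all $i,j$ with $i\neq 0\neq j$, if $\tau\rho^i\tau\rho^j\in U^g$ then $\tau\rho^i\tau\in U^g$. The cases $i=0$ or $j=0$ are trivial (the hypothesis becomes $\rho^j\in U^g$ or $\tau\rho^i\tau\in U^g$ respectively, and the conclusion holds automatically in each), so the restriction to $i,j\neq 0$ may be dropped, giving condition~(ii). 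For the parenthetical equivalence, if $\tau\rho^i\tau\rho^j=u\in U^g$ then $\tau\rho^i\tau = u\rho^{-j}$, and since $U^g$ is a subgroup, $\tau\rho^i\tau\in U^g\iff\rho^{-j}\in U^g\iff \rho^j\in U^g$.

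I do not anticipate a real obstacle: the only point that demands attention is the bookkeeping in the multiple-edge step, where condition~(i) must be invoked to discard the auxiliary clause of Proposition~\ref{grpro}(iii), and the trivial boundary cases $i=0$ or $j=0$ must be absorbed so that the cleaner quantification ``for all $i,j\in\mathbb Z$'' in~(ii) is legitimate.
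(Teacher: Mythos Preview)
Your proposal is correct and follows exactly the route the paper indicates: the paper itself gives no detailed argument but simply states that the result is an immediate deduction from Proposition~\ref{grpro} (or from Theorem~6.11 of \cite{JS}), and your plan carries out that deduction carefully. The bookkeeping you highlight---using condition~(i) to discharge the side clause in Proposition~\ref{grpro}(iii), and absorbing the boundary cases $i=0$ or $j=0$---is the only nontrivial content, and you handle it correctly.
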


\section{Monodromy graphs}

In this section, we introduce monodromy graphs and show that every connected simple graph is isomorphic to a monodromy graph.

\begin{dnt}[Multiplicity of an Edge]
Let \( \G \) be a graph. The \textbf{multiplicity} of an edge between vertices \( u \) and \( v \) is denoted by \( m(u, v) \), defined as:
\[
m(u, v) = \text{Number of edges connecting } u \text{ and } v.
\]
For loops (\( u = v \)), \( m(u, u) \) counts the number of self-loops at \( u \). For free edge, \( m(u, u) \) counts the number of free edges at \( u \).
\end{dnt}

\begin{dnt} \label{mong}
Suppose that $G=\langle\rho, \tau\rangle$ with $\tau^2=1$ is a group and $U$ is a core free subgroup of $G$. Let $S$ be a set of right
coset representatives $($or a right transversal$)$ for $U$ in $G$. The graph \( \G = \Mon(G; U, \rho, \tau) \) is called a  monodromy graph with vertex set $$V=\{Ug\langle \rho \rangle \mid g \in G\},$$
edge set $$E=\{\{Uh\langle \rho \rangle, Uh\tau \langle \rho \rangle\} \mid h\in S\},$$
and for $h\in S$, $$m(Uh\langle \rho \rangle, Uh\tau \langle \rho \rangle)=|D|$$
where $D=\{g\in S \mid g\in Uh\langle \rho \rangle \hbox{~and~} g\tau \subseteq Uh\tau \langle \rho \rangle\}$.
\end{dnt}

\begin{rem}
  An edge $\{Uh\langle \rho \rangle, Uh\tau \langle \rho \rangle\}$ is called a free edge if $h\tau\in Uh$. An edge $\{Uh\langle \rho \rangle, Uh\tau \langle \rho \rangle\}$ is called a loop if $h\tau\not\in Uh$ and $h\tau\in Uh\langle \rho \rangle$.
\end{rem}

\begin{lem} \label{epro}
Let \( \G = \Mon(G; U, \rho, \tau) \) as defined in Definition {\rm \ref{mong}}. Let $T$ be a set of right
coset representatives for $U$ in $G$. Then the edge set
$$E(\G)=\{\{Ut\langle \rho \rangle, Ut\tau \langle \rho \rangle\} \mid t\in T\}=\{\{Ug\langle \rho \rangle, Ug\tau \langle \rho \rangle\} \mid g\in G\}.$$ 
\end{lem}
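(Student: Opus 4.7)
The plan is to prove the second equality first, and then deduce the first equality as a consequence (since both $T$ and $S$ are right transversals, they behave the same way). The core observation on which everything rests is that the unordered pair $\{Ug\langle \rho \rangle, Ug\tau \langle \rho \rangle\}$ depends only on the right coset $Ug$, not on the representative $g$.

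Concretely, I would first verify this invariance: if $Uh = Ug$, then $h = ug$ for some $u \in U$, so $Uh\langle \rho \rangle = Uug\langle \rho \rangle = Ug\langle \rho \rangle$, and likewise $Uh\tau\langle \rho \rangle = Uug\tau\langle \rho \rangle = Ug\tau\langle \rho \rangle$. Hence the map $g \mapsto \{Ug\langle \rho \rangle, Ug\tau \langle \rho \rangle\}$ factors through the set of right cosets $U\backslash G$.

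For the second equality $\{\{Ut\langle \rho \rangle, Ut\tau \langle \rho \rangle\} \mid t\in T\} = \{\{Ug\langle \rho \rangle, Ug\tau \langle \rho \rangle\} \mid g\in G\}$, the inclusion $\subseteq$ is immediate since $T \subseteq G$. For $\supseteq$, given $g \in G$, write $g = ut$ with $u \in U$ and $t \in T$ (possible because $T$ is a right transversal), and apply the invariance observation above. For the first equality, the same argument shows that both descriptions coincide with the ``$g \in G$'' version: given $t \in T$, choose the unique $s \in S$ with $Ut = Us$, and the corresponding edges match by invariance; the reverse direction is symmetric.

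No step looks like a serious obstacle; the only subtlety is to make sure that $\tau$ does not interfere with the coset invariance, which is handled by left-multiplying inside $U$ (the $u \in U$ appears to the left of $t$ and hence of $t\tau$, so absorption into $U$ is clean and does not require $\tau$ to normalise anything). Thus the lemma reduces to the well-known fact that a right transversal parametrises $U\backslash G$, together with the verification that the edge assignment is a function on right cosets.
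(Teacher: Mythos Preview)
Your proposal is correct and follows essentially the same approach as the paper's proof: both hinge on the observation that if $Uh=Ug$ then $Uh\langle\rho\rangle=Ug\langle\rho\rangle$ and $Uh\tau\langle\rho\rangle=Ug\tau\langle\rho\rangle$, so the edge depends only on the right coset. The paper organises the argument as a chain of set inclusions $E(\Gamma)\subseteq E_T\subseteq E_G\subseteq E(\Gamma)$, whereas you isolate the coset-invariance first and then apply it, but the content is identical.
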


\begin{proof}
  Recall that $E(\G)=\{\{Uh\langle \rho \rangle, Uh\tau \langle \rho \rangle\} \mid h\in S\}$. Set 
  $$E_T=\{\{Ut\langle \rho \rangle, Ut\tau \langle \rho \rangle\} \mid t\in T\} \hbox{~and~}E_G=\{\{Ug\langle \rho \rangle, Ug\tau \langle \rho \rangle\} \mid g\in G\}.$$ 
  
  For any $h\in S$, there exist $t\in T$ and $g\in G$ such that $Uh=Ut=Ug$. It follows that $Uh\langle \rho \rangle=Ut\langle \rho \rangle=Ug\langle \rho \rangle$. Then
  $$\{Uh\langle \rho \rangle, Uh\tau \langle \rho \rangle\} = \{Ut\langle \rho \rangle, Ut\tau \langle \rho \rangle\}=\{Ug\langle \rho \rangle, Ug\tau \langle \rho \rangle\}.$$
  And so $E(\G)\subseteq E_T$ and $E(\G)\subseteq E_G$. Similarly, we have
  $$E_T\subseteq E_G \hbox{~and~} E_G\subseteq E(\G).$$ Hence $E(\G)=E_T=E_G$.
\end{proof}

Clearly, we have the following proposition about the neighbors of a vertex.
\begin{pro}
Let \( \G = \Mon(G; U, \rho, \tau) \) with $H = \langle \rho \rangle$. Then the neighbourhood of vertex $UkH$ is $\{Ug\tau H \di g\in UkH \hbox{~and~} g\tau\not\in Ug\}$ for $k\in G$.
\end{pro}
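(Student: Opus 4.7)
The plan is to derive the neighborhood description directly from the edge-set formula in Lemma~\ref{epro}, namely $E(\G) = \{\{UgH, Ug\tau H\} \mid g \in G\}$. A vertex $w$ lies in the neighborhood of $UkH$ precisely when there exists $g \in G$ with $\{UgH, Ug\tau H\} = \{UkH, w\}$, so the task reduces to characterizing such $g$.

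First I would split the set equality into two cases: either $UgH = UkH$ and $Ug\tau H = w$, or $UgH = w$ and $Ug\tau H = UkH$. In the second case, since $\tau^2 = 1$, replacing $g$ by $g' = g\tau$ gives $Ug'H = Ug\tau H = UkH$ and $Ug'\tau H = UgH = w$, collapsing it into the first case. Observing that $UgH = UkH$ is equivalent to $g \in UkH$ (write $g = ukh$ with $u \in U$, $h \in H$ for one direction, and note $g \in UgH = UkH$ for the other), I would conclude that the candidate neighbors form $\{Ug\tau H \mid g \in UkH\}$.

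It remains to justify the exclusion $g\tau \notin Ug$. When $g\tau \in Ug$, the unordered pair $\{UgH, Ug\tau H\}$ degenerates to the singleton $\{UkH\}$ and represents a free edge at $UkH$ rather than a proper adjacency; such $g$ contribute nothing to the neighborhood. By contrast, loops (where $g\tau \notin Ug$ but $g\tau \in UgH$) are correctly retained, since they correspond to two-dart self-edges providing a genuine self-adjacency.

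The argument is essentially a bookkeeping exercise once Lemma~\ref{epro} is in hand, so I do not anticipate any serious obstacle; the only subtlety is keeping straight the distinction between free edges and loops in the definition given at the start of Section~3, which dictates precisely which pairs $\{UgH, Ug\tau H\}$ should be recorded as adjacencies.
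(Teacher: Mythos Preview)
Your proposal is correct and follows essentially the same approach as the paper: both arguments derive the neighborhood directly from the edge-set description in Lemma~\ref{epro}, use the equivalence $UgH=UkH \Leftrightarrow g\in UkH$, and exclude precisely those $g$ with $g\tau\in Ug$ as yielding free edges rather than adjacencies. Your explicit $g\mapsto g\tau$ swap to collapse the two cases of the unordered-pair equality is handled implicitly in the paper (which simply asserts the existence of $g$ with $UgH=UkH$), but otherwise the structure is the same.
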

\begin{proof}
  Set $N=\{Ug\tau H \di g\in UkH \hbox{~and~} g\tau\not\in Ug\}$. For any $g\in UkH$ such that $g\tau\not\in Ug$, by Proposition \ref{dbc}, we have
  $UgH=UkH$. It follows from  Lemma \ref{epro} that $\{UkH, Ug\tau H\}=\{UgH, Ug\tau H\}\in E(\G)$, and therefore $Ug\tau H$ is a neighbor of $UkH$.
  
  On the other hand, let $UaH$ be a neighbor of $UkH$. By Lemma \ref{epro}, there exists $g\in G$ such that $UkH=UgH$ and $Ug\tau H=UaH$. Then from
Proposition \ref{dbc}, $g\in UkH$. Since $UaH$ is a neighbor of $UkH$, the edge $\{UkH, UaH\}=\{UgH, Ug\tau H\}$ is not a free edge. And so $g\tau\not\in Ug$.

Therefore, the neighbourhood of $UkH$ is $\{Ug\tau H \di g\in UkH \hbox{~and~} g\tau\not\in Ug\}$.
\end{proof}

\begin{lem}
  Let \( \G = \Mon(G; U, \rho, \tau) \). For $h\in G$, the valency of $Uh\langle \rho \rangle$ is $$\frac{|\langle \rho \rangle|}{|\langle \rho \rangle \cap U^h|}.$$
\end{lem}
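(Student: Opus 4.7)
The plan is to identify the darts incident to the vertex $Uh\langle \rho \rangle$ with the right cosets of $U$ contained in the double coset $Uh\langle \rho \rangle$, and then apply the double coset counting formula from Proposition \ref{dbc}.

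First, I would set $H=\langle \rho \rangle$ and clarify how darts at vertex $UhH$ are encoded by the monodromy graph construction. By Definition \ref{mong} together with Lemma \ref{epro}, each right coset $Ug$ (equivalently, each $g$ in a right transversal) contributes the edge $\{UgH,Ug\tau H\}$, and $Ug$ should naturally be regarded as a dart at the vertex $UgH$ pointing towards the vertex $Ug\tau H$. This is consistent with Proposition \ref{iso}, which identifies $\O$ with $[G:U]$, and with Proposition \ref{2.1b}, which identifies vertices with cycles of $\rho$; here the $\langle\rho\rangle$-orbit of $Ug$ under right multiplication is precisely the set of right cosets of $U$ contained in the double coset $UgH$.

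Second, I would verify via a brief case analysis that this assignment of darts is bookkeeping-correct: an ordinary edge $\{UgH,Ug\tau H\}$ contributes one dart $Ug$ to $UgH$ and one dart $Ug\tau$ to $Ug\tau H$; a loop (where $Ug\tau\neq Ug$ but $Ug\tau H=UgH$) contributes both $Ug$ and $Ug\tau$ as distinct darts at the same vertex, in keeping with the convention that loops contribute valency two; and a free edge (where $Ug\tau=Ug$) contributes the single dart $Ug$, matching the definition that a free edge contributes valency one. In every case, the darts at $UhH$ biject with the right cosets of $U$ lying inside $UhH$.

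Finally, applying Proposition \ref{dbc} to the subgroups $U$ and $H=\langle\rho\rangle$ with double coset representative $h$, the number of right cosets of $U$ contained in $UhH$ is exactly
\[
\frac{|\langle \rho \rangle|}{|\langle \rho \rangle\cap U^h|},
\]
which is therefore the valency of $Uh\langle \rho \rangle$. The argument is essentially a direct invocation of the double coset formula, and the only potential obstacle is the careful case-checking that darts at $UhH$ really correspond bijectively to right cosets of $U$ inside the double coset when loops and free edges are present; once this bookkeeping is settled, the conclusion is immediate.
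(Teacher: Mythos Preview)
Your proposal is correct and follows essentially the same approach as the paper: identify the darts at the vertex $Uh\langle\rho\rangle$ with the right cosets of $U$ contained in that double coset, and then invoke Proposition~\ref{dbc}. The paper's proof is in fact much terser---it simply cites Proposition~\ref{dbc} for the coset count and refers back to Definition~\ref{mong} without spelling out the loop/free-edge bookkeeping---so your case analysis is more detailed than what the paper provides, but the underlying argument is identical.
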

\begin{proof}
  By Proposition \ref{dbc}, the number of right cosets of $U$ in $G$ that lie in $Uh\langle \rho \rangle$ is $\frac{|\langle \rho \rangle|}{|\langle \rho \rangle \cap U^h|}$. So the valency of $Uh\langle \rho \rangle$ is $\frac{|\langle \rho \rangle|}{|\langle \rho \rangle \cap U^h|}$ from Definition \ref{mong}.
\end{proof}

\begin{theorem}
  Let $\M=(G;\O,\rho,\tau)$ be an algebraic map and $\a\in\O$. Then the underlying graph $\Sigma$ of $\M$ is isomorphic to the monodromy graph $\G=\Mon(G;G_\a,\rho,\tau)$.
\end{theorem}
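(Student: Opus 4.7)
The plan is to construct the isomorphism by transporting the combinatorial structure of $\M$ to the right cosets of $U=G_\alpha$ via the permutational isomorphism of Proposition \ref{iso}, and then checking that the resulting combinatorial object is exactly $\G=\Mon(G;U,\rho,\tau)$.

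First I would identify the darts of $\M$ with the right cosets of $U$ in $G$. Since $G$ acts transitively on $\O$, Proposition \ref{iso} supplies a bijection $\gamma\colon\O\to[G:U]$, $\alpha^g\mapsto Ug$, that intertwines the $G$-actions; in particular $\rho$ and $\tau$ act on $[G:U]$ by right multiplication. Hence the $\rho$-cycles on $\O$ correspond to the $\langle\rho\rangle$-orbits on $[G:U]$, which are precisely the double cosets $Ug\langle\rho\rangle$, and the $\tau$-cycles on $\O$ correspond either to pairs $\{Ug,Ug\tau\}$ or to singletons $\{Ug\}$ when $g\tau\in Ug$.

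Next I would use Propositions \ref{2.1a} and \ref{2.1b} to define the vertex and edge maps. Proposition \ref{2.1b} identifies vertices of $\Sigma$ with $\rho$-cycles on $\O$, so composing with $\gamma$ gives a bijection $\phi\colon V(\Sigma)\to V(\G)$ sending $\mathrm{vert}(\alpha^g)$ to $Ug\langle\rho\rangle$; this is well defined because if $\alpha^{g'}$ lies in the same $\rho$-cycle as $\alpha^g$ then $Ug'=Ug\rho^i$ for some $i$, yielding the same $\langle\rho\rangle$-orbit. Similarly, Proposition \ref{2.1a} identifies edges of $\Sigma$ with $\tau$-cycles on $\O$, and composing with $\gamma$ gives a map $\psi\colon E(\Sigma)\to E(\G)$ sending the edge carrying $\alpha^g$ to $\{Ug\langle\rho\rangle,Ug\tau\langle\rho\rangle\}$, which is a member of $E(\G)$ by Lemma \ref{epro}. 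Incidence is preserved by construction, since the dart $\alpha^g$ maps to $Ug$, whose vertex is $Ug\langle\rho\rangle$ and whose edge is $\{Ug\langle\rho\rangle,Ug\tau\langle\rho\rangle\}$.

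The main obstacle is verifying that $\phi$ and $\psi$ match the multiplicity assignment built into Definition \ref{mong}. For a pair of vertices $v=Uh\langle\rho\rangle$ and $w=Uh\tau\langle\rho\rangle$, I would count the edges of $\Sigma$ between them as $\tau$-cycles on $\O$ joining these two $\rho$-orbits; after transfer by $\gamma$ such a cycle is a pair $\{Ug,Ug\tau\}$ with $Ug\subseteq v$ and $Ug\tau\subseteq w$, and taking the unique representative of $Ug$ from the transversal $S$ puts these in bijection with the set $D$ of Definition \ref{mong}, so the count is $|D|$. The delicate subcases are loops ($v=w$ with $g\tau\notin Ug$) and free edges ($g\tau\in Ug$); these are resolved by invoking the Remark after Definition \ref{mong}, which characterises loops and free edges of $\G$ by precisely the conditions on $g\tau$ that describe length-two $\tau$-cycles inside a single $\rho$-orbit and $\tau$-fixed darts in $\O$, respectively.
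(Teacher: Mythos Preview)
Your proposal is correct and follows essentially the same strategy as the paper: transport darts to right cosets of $U=G_\alpha$, identify vertices with $\langle\rho\rangle$-orbits (double cosets $Ug\langle\rho\rangle$) via Proposition~\ref{2.1b}, and edges with $\tau$-cycles via Proposition~\ref{2.1a}, using Lemma~\ref{epro} to recognise the resulting edge set as $E(\G)$. The paper organises the vertex bijection through the double coset decomposition and Lemma~\ref{suborb} rather than invoking Proposition~\ref{iso} directly, and it does not spell out the multiplicity count; your explicit verification that the number of $\tau$-cycles joining two $\rho$-orbits equals $|D|$ from Definition~\ref{mong}, together with the loop and free-edge subcases, is a welcome addition that the paper leaves implicit.
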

\begin{proof}
 Set $H= \langle \rho\rangle$. Suppose
\[G = \bigcup_{i=1}^t G_\a g_i H\]
is the double coset decomposition of \( G \) with respect to the subgroups $ G_\a$ and $H$, where \( g_1 \in G_\a \). Then $V(\G)=\{G_\a g_i H \mid i=1,\ldots,t\}$. By Lemma \ref{suborb}, \(\Delta_i = \{\a^g \mid g \in G_\a g_i H\}\), for \( i = 1, \ldots, t \), are all the orbits of \( H\) on \(\Omega\).  Let $$\phi : V(\Sigma) \rightarrow V(\G)$$ be the map defined by $\phi(v) = G_\alpha g_iH$ if and  only if $\text{dart}(v) = \Delta_i$ for all $i \in [t]$.  By Proposition \ref{2.1b}, there is a natural bijection between $V(\Sigma)$ and the cycles of $\rho$
on $\Omega$, with darts $\o, \beta \in \Omega$ lying in the same cycle
if and only if $\text{vert}(\o) = \text{vert}(\beta)$. Since for each $i\in [t]$, $\Delta_i$ forms a cycle of $\rho$, it follows that $\phi$ is a bijection.

By Lemma \ref{2.1a}, there is a natural bijection between $E(\Sigma)$ and the cycles of $\tau$
on $\Omega$, with darts $\o, \beta \in \Omega$ lying in the same cycle
if and only if $\text{edge}(\o) = \text{edge}(\beta)$. For any $\o\in\O$, suppose that $\o=\a^h$ for some $h\in G$. Then $\text{edge}(\o)$ is an edge of $V(\Sigma)$ if and only if $\{G_\a h H, G_\a h\tau H\}$ is an edge of $V(\G)$. It follows that $\phi$ induces a bijection from $E(\Sigma)$ to $E(\G)$. Therefore, $\Sigma$ is isomorphic to $\G$.
\end{proof}

$\G$ is called a \emph{monodromy representation} of $\Sigma$.

Since every connected simple graph  have an embedding on orientable surface, we have the following corollary. Here we give a basic proof.

\begin{cor}\label{mon}
   Every connected simple graph  \( \Sigma \) with more than one vertex is isomorphic to a monodromy graph.
\end{cor}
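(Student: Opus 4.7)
The plan is to realize $\Sigma$ as the underlying graph of a concrete algebraic map $\mathcal{M}(G;\Omega,\rho,\tau)$ and then apply the theorem just proved. Although the paragraph preceding the corollary appeals to the existence of an orientable embedding of $\Sigma$, this combinatorial content can be supplied directly by choosing a rotation system on $\Sigma$, which is the route I would take for a self-contained ``basic'' argument.

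First, I would let $\Omega$ be the set of darts of $\Sigma$ and, for each vertex $v$ of $\Sigma$, fix an arbitrary cyclic ordering of the darts based at $v$. Define $\rho\in\Sym(\Omega)$ to send each dart to the next one in the cyclic ordering at its base vertex, and define $\tau\in\Sym(\Omega)$ to swap the two darts of each edge (well-defined and fixed-point-free because $\Sigma$ is simple: every edge has two distinct darts, and there are no free edges or loops). Then $\tau^2=1$, and by construction the cycles of $\rho$ (respectively $\tau$) correspond bijectively to the vertices (respectively edges) of $\Sigma$, recovering its incidence data. Setting $G=\langle\rho,\tau\rangle\le\Sym(\Omega)$, I would verify transitivity on $\Omega$ from connectivity of $\Sigma$: any walk from $\text{vert}(\alpha)$ to $\text{vert}(\beta)$ lifts to a word in $\rho$ and $\tau$ sending $\alpha$ to $\beta$, using powers of $\rho$ to select the next outgoing dart at each vertex and $\tau$ to cross each chosen edge. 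Since $G$ is defined as a subgroup of $\Sym(\Omega)$ and therefore acts faithfully, the stabilizer $U=G_\alpha$ of any dart $\alpha$ is automatically core-free.

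This confirms that $\mathcal{M}(G;\Omega,\rho,\tau)$ is a legitimate algebraic map whose underlying graph is $\Sigma$, and the preceding theorem then yields $\Sigma\cong\Mon(G;U,\rho,\tau)$. The hypothesis that $\Sigma$ has more than one vertex is used only to ensure $\Omega\neq\emptyset$, so that the action is nontrivially transitive. I do not anticipate a genuine obstacle: the sole nontrivial step is the transitivity argument, which is a routine translation of walks in $\Sigma$ into words in $\rho$ and $\tau$, while the identification of the cycle structures with the vertex and edge sets of $\Sigma$ follows directly from Propositions \ref{2.1a} and \ref{2.1b}.
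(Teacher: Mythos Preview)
Your proposal is correct and sets up $\rho$, $\tau$, and $G=\langle\rho,\tau\rangle$ exactly as the paper does: its $\tau$ is the arc-reversing involution $(u,v)\mapsto(v,u)$, and its $\rho$ is chosen so that the $\langle\rho\rangle$-orbit of each arc $(u,v)$ is precisely the set of arcs based at $u$, i.e.\ a rotation system. The only difference is that you conclude by invoking the preceding theorem, whereas the paper's ``basic proof'' bypasses that theorem and instead builds the graph isomorphism $\phi\colon V(\Sigma)\to V(\Gamma)$, $f(\alpha^h)\mapsto G_\alpha h\langle\rho\rangle$, by hand and checks the edge correspondence directly.
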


\begin{proof}

Let \( \Sigma=(V,E,D) \) be the graph with  vertex set \( V \), edge set \( E \), and arc set \( D \).  Let \( \tau \in \text{Sym}(D) \) be an involution such that
\[(u, v)^\tau = (v, u)\]
for all \((u, v) \in D\). Let \(\rho \in \text{Sym}(D)\) satisfy the condition that, for each \(u \in V\), the orbit \((u, v)^{\langle \rho \rangle}\) coincides with the set of all arcs incident to \(u\).

From now on, $\text{ let } \alpha = (u, v) \in D \text{ and } G = \langle \rho, \tau \rangle.$
Setting  \( H = \langle \rho \rangle \), \( K = \langle \tau \rangle \) and \( \G = \Mon(G; G_\alpha, \rho, \tau) \). Clearly, $\Core_G(G_\alpha)=1$. We claim that \( \Sigma \) is isomorphic to \( \G \).

 Set $f(w,z)=w$ for $(w,z)\in D$. Let $$\phi : V \rightarrow V(\G)$$ be the map defined by $\phi(f(\alpha^h)) = G_\alpha hH$ for all $h \in G$. Suppose that $f(\a^h)\ne f(\a^g)$. Then $\a^{G_\alpha hH}\ne \a^{G_\alpha gH}$, and so $G_\alpha hH \ne G_\alpha gH$.
  Note that $G$ is transitive on $D$. So $\phi$ is a bijection.

  Suppose that $\{w,z\}\in E$. Since $G$ is transitive on $D$, there exists an element $b\in G$ such that $\a^b=(w,z)$ and $\a^{b\tau}=(z,w)$. It follows that $\{\phi(f(\alpha^b)),\phi(f(\alpha^{b\tau}))\}=\{G_{\a}bH,G_{\a}b\tau H\}\in E(\G)$.

On the other hand, suppose that $\{G_{\a}hH,G_{\a}g H\}\in E(\G)$. Then from  Lemma \ref{epro}, there exists $x\in G$ such that
$$G_{\a}hH = G_{\a}xH
\hbox{~and~} G_{\a}g H = G_{\a}x\tau K.$$
Then $$f(\a^{h})=f(\a^{x}) \hbox{~and~}=f(\a^g)=f(\a^{x\tau}).$$
It follows that $\{f(\a^{h}), f(\a^{g})\}=\{f(\a^{x}), f(\a^{x\tau})\}\in E$. Hence $\phi$ induces a bijection
from $E$ to $E(\G)$.

 Therefore, \( \Sigma \) is isomorphic to \( \G \).
\end{proof}

Similar to Corollary \ref{mon}, one can get the following theorem.

\begin{theorem}
 Every graph  \( \Sigma \) is isomorphic to a monodromy graph.
\end{theorem}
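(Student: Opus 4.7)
The plan is to mimic the proof of Corollary \ref{mon} and reduce to the theorem preceding it (which shows that the underlying graph of any algebraic map $\mathcal{M}(G;\O,\rho,\tau)$ is isomorphic to $\Mon(G;G_\a,\rho,\tau)$). Thus it suffices to realise $\Sigma$ as the underlying graph of some algebraic map, now allowing loops, multiple edges, free edges, and possibly a single vertex.

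First I would build a dart set $\O$ for $\Sigma$: each ordinary edge $\{u,v\}$ contributes two darts (one directed from $u$ to $v$ and one from $v$ to $u$), each loop at a vertex $u$ contributes two darts distinguished by orientation, and each free edge at $u$ contributes a single dart. I then define the involution $\tau\in\Sym(\O)$ that exchanges the two darts on every non-free edge (including loops) and fixes the single dart on each free edge; this gives $\tau^2=1$ and makes the cycles of $\tau$ correspond bijectively to $E(\Sigma)$, as in Proposition \ref{2.1a}.

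Next, for each vertex $v$ I would fix an arbitrary cyclic order on the darts incident to $v$, and let $\rho\in\Sym(\O)$ be the product of these disjoint cycles, so that the cycles of $\rho$ correspond to $V(\Sigma)$ in the sense of Proposition \ref{2.1b}. Setting $G=\langle\rho,\tau\rangle$, connectivity of $\Sigma$ translates into transitivity of $G$ on $\O$, because any dart can be transported to any other by alternating applications of $\tau$ and powers of $\rho$, mirroring a walk in $\Sigma$. Hence $\mathcal{M}(G;\O,\rho,\tau)$ is an algebraic map whose underlying graph is $\Sigma$ by construction, and applying the preceding theorem with any fixed $\a\in\O$ gives $\Sigma\cong\Mon(G;G_\a,\rho,\tau)$.

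The main obstacle, and the step I would verify most carefully, is that the edge multiplicity function $m$ of Definition \ref{mong} matches the multiplicities in $\Sigma$, with correct handling of loops and free edges. This reduces to checking that the right cosets of $G_\a$ inside a double coset $G_\a h\langle\rho\rangle$ whose $\tau$-image lies in $G_\a h\tau\langle\rho\rangle$ correspond exactly to the darts at the vertex $G_\a h\langle\rho\rangle$ that belong to an edge joining it to $G_\a h\tau\langle\rho\rangle$, and then using Proposition \ref{grpro} to see that free edges arise precisely when $h\tau\in G_\a h$ and loops precisely when $h\tau\notin G_\a h$ but $h\tau\in G_\a h\langle\rho\rangle$. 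The simple-graph version in Corollary \ref{mon} sidesteps these subtleties via Proposition \ref{simple}; here they must be addressed directly.
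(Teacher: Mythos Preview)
Your proposal is correct and matches the paper's approach: the paper gives no detailed proof at all, merely prefacing the theorem with ``Similar to Corollary~\ref{mon}, one can get the following theorem.'' Your plan is precisely a fleshing-out of that hint---build the dart set with the appropriate conventions for loops, multiple edges and free edges, define $\rho$ and $\tau$ as a rotation system, observe that connectivity gives transitivity of $G=\langle\rho,\tau\rangle$, and conclude. Your decision to invoke the preceding theorem (that the underlying graph of $\mathcal{M}(G;\Omega,\rho,\tau)$ is isomorphic to $\Mon(G;G_\alpha,\rho,\tau)$) rather than rebuilding the explicit isomorphism $\phi$ from scratch, as Corollary~\ref{mon} does, is a sensible economy and not a genuine departure; your caution about verifying multiplicities is well-placed, since the paper's own proof of that preceding theorem is terse on this point.
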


\section{Automorphisms of monodromy graphs}

In this section, we investigate automorphisms of monodromy graphs.

 Let \( \G = \Mon(G; U, \rho, \tau) \). For $a\in N_G(U)$, define $a_L : V\rightarrow V$ such that $$(Uh\langle \rho \rangle)^{a_L}=a^{-1}Uh\langle \rho \rangle=Ua^{-1}h\langle \rho \rangle$$ for $Uh\langle \rho \rangle\in V$. Set $A=\{a_L \mid a\in N_G(U)\}$.
Then $A$ is called a {\it map automorphism group} of $\G$.

The following theorem is basic.
\begin{theorem}\label{semi}
  Let \( \G = \Mon(G; U, \rho, \tau) \) be a connected monodromy graph and $A=\{a_L \mid a\in N_G(U)\}$.
  Then the following hold.
  \begin{enumerate}[{\rm(i)}]
\item
  $A$ is a subgroup of $\Aut(\G)$.

\item If $U=1$, then $\G$ is arc-transitive.
  \end{enumerate}
\end{theorem}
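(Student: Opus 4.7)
The plan is a direct verification in four steps for (i), followed by a short transitivity argument for (ii). For (i), I would check in order that the map $a_L$ is well-defined on vertices, that $a \mapsto a_L$ is a group homomorphism, that $a_L$ preserves the edge set, and that $a_L$ preserves edge multiplicities. Well-definedness is exactly where the hypothesis $a \in N_G(U)$ is used: given two representatives with $Uh\langle\rho\rangle = Uh'\langle\rho\rangle$, we may write $h' = uh\rho^k$ for some $u \in U$ and $k \in \Z$, and since $a^{-1}Ua = U$ we have $a^{-1}u = (a^{-1}ua)a^{-1} \in Ua^{-1}$, hence $a^{-1}h' \in Ua^{-1}h\langle\rho\rangle$, so the image coset is independent of the chosen representative.

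The multiplicativity $a_L b_L = (ab)_L$ then follows from a two-line calculation: applying $a_L$ and then $b_L$ sends $Uh\langle\rho\rangle$ to $Ub^{-1}a^{-1}h\langle\rho\rangle = U(ab)^{-1}h\langle\rho\rangle$. Thus $a \mapsto a_L$ is a group homomorphism $N_G(U) \to \Sym(V(\G))$ whose image is $A$, and $a_L$ has inverse $(a^{-1})_L$, so $A$ is a subgroup of $\Sym(V(\G))$. Edge preservation is then immediate from Lemma \ref{epro}: the edge $\{Uk\langle\rho\rangle, Uk\tau\langle\rho\rangle\}$ is sent to $\{U(a^{-1}k)\langle\rho\rangle, U(a^{-1}k)\tau\langle\rho\rangle\}$, which is again of the required form with representative $a^{-1}k \in G$. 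Multiplicity preservation follows in the same spirit, because left multiplication by $a^{-1}$ is a bijection of $G$ that respects $U$-cosets (as $a$ normalizes $U$) and commutes with right multiplication by $\tau$; consequently it induces a bijection between the counting sets $D$ of Definition \ref{mong} for an edge and its image.

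For (ii), if $U = 1$ then $N_G(U) = G$ and $A = \{a_L \mid a \in G\}$. By Lemma \ref{epro}, every arc of $\G$ has the form $(g\langle\rho\rangle, g\tau\langle\rho\rangle)$ for some $g \in G$. Given two such arcs with representatives $g$ and $h$, choose $a = gh^{-1} \in G$; then $a_L$ sends $g\langle\rho\rangle$ to $a^{-1}g\langle\rho\rangle = h\langle\rho\rangle$ and $g\tau\langle\rho\rangle$ to $h\tau\langle\rho\rangle$, establishing transitivity of $A$ on arcs.

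The main obstacle I anticipate is a clean treatment of the multiplicity-preservation clause in (i) in the presence of loops, multiple edges, or free edges; however, once the set $D$ of Definition \ref{mong} is unpacked, the observation that left multiplication by $a^{-1}$ preserves $U$-cosets, $\langle\rho\rangle$-cosets, and the operation $g \mapsto g\tau$ handles every case uniformly. Everything else is essentially a homomorphism-and-bookkeeping argument.
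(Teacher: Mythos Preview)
Your proposal is correct and follows essentially the same route as the paper: edge preservation via Lemma~\ref{epro}, a bijection between the counting sets $D$ for multiplicity preservation (the paper writes this concretely as $Q=a^{-1}D$ and the double inequality $|D|=|Q|\le |D_a|$, $|D_a|\le |D|$), and a short homomorphism computation for $a\mapsto a_L$; your explicit well-definedness check is a welcome addition the paper leaves implicit. For (ii) you argue directly by choosing $a=gh^{-1}$ to carry one arc to another, whereas the paper uses the two-step ``vertex-transitive plus stabilizer $H$ transitive on neighbours'' argument---both are standard and equally brief.
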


\begin{proof} Let $H=\langle \rho \rangle$.
  \begin{enumerate}[{\rm(i)}]
\item Let $S$ be a set of right
coset representatives for $U$ in $G$. Take $a\in N_G(U)$  and edge $\{UhH, Uh\tau H\}$ with $h\in S$. Then $a_L \in A$, and we have
\[ \{UhH, Uh\tau H\}^{a_L} = \{Ua^{-1}hH, Ua^{-1}h\tau H\} \]
which is also an edge in $\G$ from Lemma \ref{epro}. Let $D=\{g\in S \mid g\in Uh\langle \rho \rangle \hbox{~and~} g\tau \subseteq Uh\tau \langle \rho \rangle\}$, $Q=a^{-1}D=\{a^{-1}g \mid g\in D\}$ and  $D_a=\{g\in S \mid g\in Ua^{-1}h\langle \rho \rangle \hbox{~and~} g\tau \subseteq Ua^{-1}h\tau \langle \rho \rangle\}$. Then for all $x,y\in Q$, $Ux\ne Uy$ if and only if $x\ne y$. Note that $Ua^{-1}=a^{-1}U$, one can get $|D|=|Q|\le D_a$. Similarly, we have $|D_a|\le |D|$. And so
$$m(Uh\langle \rho \rangle, Uh\tau \langle \rho \rangle)=m(Ua^{-1}hH, Ua^{-1}h\tau H).$$
 Thus $a_L \leq \Aut(\G)$. For any $b_L \in A$, we have
\begin{align*}
\{UhH, Uh\tau H\}^{(ab^{-1})_L}
&= \{U(ab^{-1})^{-1}hH, U(ab^{-1})^{-1}h\tau H\} \\
&= \{Uba^{-1}hH, Uba^{-1}h\tau H\} \\
&= \left(\{UhH, Uh\tau H\}^{a_L}\right)^{b^{-1}_L}.
\end{align*}
Hence $A \leq \Aut(\G)$.
\item When $U=1$, $A=G_L$ acts transitively on the vertices (as $V = \{H, gH, \ldots\}$) and the stabilizer of $H$ is $H$ itself, which acts transitively on its neighbors, making $\G$ arc-transitive.
\end{enumerate}
\end{proof}

From Theorem \ref{semi}, we have the following corollary.
\begin{cor}\label{arc}

  Let $\G = \mathrm{Mon}(G; U, \rho, \tau)$ be a monodromy representation of a graph. Then
  $\Sigma=\mathrm{Mon}(G; 1, \rho, \tau)$ is an arc-transitive graph.
\end{cor}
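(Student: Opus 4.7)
The plan is to realise the conclusion as a direct application of Theorem~\ref{semi}(ii) to the graph $\Sigma$. First I would verify that $\Sigma = \mathrm{Mon}(G; 1, \rho, \tau)$ is a legitimate monodromy graph: because $\G = \mathrm{Mon}(G; U, \rho, \tau)$ is given, the group $G$ already satisfies $G = \langle \rho, \tau \rangle$ and $\tau^2 = 1$, and the trivial subgroup $\{1\}$ is vacuously core-free in $G$. All the hypotheses of Definition~\ref{mong} therefore remain in force when $U$ is replaced by $1$.

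Second, I would check that $\Sigma$ is connected, since connectedness is built into the hypothesis of Theorem~\ref{semi}. The vertex set of $\Sigma$ consists of the right cosets $g\langle \rho \rangle$, and by Lemma~\ref{epro} the edge relation joins $g\langle \rho \rangle$ to $g\tau\langle \rho \rangle$. Because $G$ is generated by $\rho$ and $\tau$, an arbitrary element $g \in G$ may be expanded as a word in these generators; reading along such a word, each occurrence of $\rho$ leaves the current vertex unchanged while each occurrence of $\tau$ traverses an edge. Hence every vertex is joined by a walk to the vertex $\langle \rho \rangle$, and $\Sigma$ is connected.

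With the two preliminary points confirmed, Theorem~\ref{semi}(ii) applies directly and delivers the arc-transitivity of $\Sigma$: the map automorphism group $G_L = \{a_L \mid a \in G\}$ sits inside $\Aut(\Sigma)$, acts transitively on vertices, and the stabiliser of $\langle \rho \rangle$ is $\langle \rho \rangle$ itself, which by definition permutes the neighbours of $\langle \rho \rangle$ transitively. I do not anticipate any serious obstacle: the corollary is essentially the assertion that the $U = 1$ clause of Theorem~\ref{semi} is never vacuous in this setting, and the only substantive checkpoint is the connectedness of $\Sigma$, which follows immediately from $G = \langle \rho, \tau \rangle$.
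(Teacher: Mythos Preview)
Your proposal is correct and follows exactly the route the paper intends: the paper simply says ``From Theorem~\ref{semi}, we have the following corollary'' and gives no further argument, so applying Theorem~\ref{semi}(ii) with $U=1$ is the whole proof. Your additional care in verifying that $\Sigma$ is a legitimate monodromy graph and, in particular, that it is connected (a hypothesis of Theorem~\ref{semi} that the paper leaves implicit) is a welcome clarification rather than a departure.
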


\begin{rem}
Corollary \ref{arc} shows that
every graph gives rise to an arc-transitive graph through its monodromy representation.
\end{rem}

\section{Enumerating maps with a given monodromy group}

Based on Proposition \ref{iso}, we denote an algebaric map  $\mathcal{M}(G;\O,\rho,\tau)$ by $\mathcal{M}(G;U,\rho,\tau)$ where $U$ is a stabiliser in $G$. 

\begin{dnt}
  Let $G=\langle \rho, \tau \rangle$ with $\tau^2=1$ be a group. Suppose that $U$ is a subgroup of $G$ such that
  the permutation representation $\pi$ of $G$ on the right cosets of $U$ is faithful. Then
  $\M(G; U, \rho,\tau)$ is called an algebaric map.
\end{dnt}

\begin{theorem}
  Let $\M=\M(G; U, \rho,\tau)$ and $\widetilde{\M}=\M(\widetilde{G}; \widetilde{U}, \widetilde{\rho},\widetilde{\tau})$ be two maps.
  Then $\M\cong \widetilde{\M}$ if and only if there exists a group isomorphism $\sigma$ from $G$ to $\widetilde{G}$ such that for some $ \widetilde{y}\in  \widetilde{G}$,
  $$U^\sigma= \widetilde{y}^{-1}\widetilde{U}\widetilde{y},~ \rho^\sigma = \widetilde{\rho} \hbox{~and~} \tau^\sigma= \widetilde{\tau}.$$
\end{theorem}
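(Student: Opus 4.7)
The plan is to interpret a map isomorphism $\M\cong\widetilde{\M}$ as a dart-level bijection intertwining the monodromy generators, and then use the faithfulness of the coset actions (built into the definition of an algebraic map) to lift it to a group isomorphism between $G$ and $\widetilde{G}$. Throughout, I identify the darts of $\M$ with the right cosets $[G:U]$ on which $G$ acts faithfully by right multiplication via $\pi\colon G\hookrightarrow\Sym([G:U])$, and analogously $\widetilde{\pi}\colon \widetilde{G}\hookrightarrow\Sym([\widetilde{G}:\widetilde{U}])$.

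For the ``only if'' direction, assume $\M\cong\widetilde{\M}$ via a bijection $\vartheta\colon[G:U]\to[\widetilde{G}:\widetilde{U}]$ satisfying $\pi(\rho)\vartheta=\vartheta\widetilde{\pi}(\widetilde{\rho})$ and $\pi(\tau)\vartheta=\vartheta\widetilde{\pi}(\widetilde{\tau})$. Conjugation by $\vartheta$ maps the generators of $\pi(G)$ to those of $\widetilde{\pi}(\widetilde{G})$, hence sends $\pi(G)$ isomorphically onto $\widetilde{\pi}(\widetilde{G})$. Composing with $\pi^{-1}$ on one side and $\widetilde{\pi}$ on the other produces a group isomorphism $\sigma\colon G\to\widetilde{G}$ with $\rho^\sigma=\widetilde{\rho}$ and $\tau^\sigma=\widetilde{\tau}$. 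Since $U$ is the $\pi$-stabiliser of the distinguished coset $U\in[G:U]$, its image $U^\sigma$ is the $\widetilde{\pi}$-stabiliser of the dart $(U)\vartheta\in[\widetilde{G}:\widetilde{U}]$; writing $(U)\vartheta=\widetilde{U}\widetilde{y}$ for some $\widetilde{y}\in\widetilde{G}$, this stabiliser equals $\widetilde{y}^{-1}\widetilde{U}\widetilde{y}$, yielding $U^\sigma=\widetilde{y}^{-1}\widetilde{U}\widetilde{y}$.

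For the ``if'' direction, given $\sigma$ and $\widetilde{y}$ as hypothesised, I would define $\vartheta\colon[G:U]\to[\widetilde{G}:\widetilde{U}]$ by $Ug\mapsto\widetilde{U}\widetilde{y}\,g^\sigma$. Well-definedness follows from $U^\sigma=\widetilde{y}^{-1}\widetilde{U}\widetilde{y}$, injectivity from reversing this argument, and surjectivity from $\sigma$ being onto combined with $[G:U]=[\widetilde{G}:\widetilde{U}]$. Intertwining with $\rho$ reduces to the one-line identity $(Ug)^{\rho\vartheta}=\widetilde{U}\widetilde{y}(g\rho)^\sigma=\widetilde{U}\widetilde{y}\,g^\sigma\widetilde{\rho}=((Ug)\vartheta)^{\widetilde{\rho}}$, and the same computation works for $\tau$, so $\vartheta$ realises a map isomorphism.

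The only delicate point is the group-level versus permutation-level passage in the forward direction: one must ensure that $\rho^\sigma=\widetilde{\rho}$ holds as an equality in $\widetilde{G}$, not merely up to $\ker\widetilde{\pi}$, which is exactly what the faithfulness of $\widetilde{\pi}$ guarantees. Beyond this, the argument is routine coset bookkeeping leaning on Proposition~\ref{iso} to pin down the correspondence between darts and cosets.
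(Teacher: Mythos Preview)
Your proposal is correct and follows essentially the same route as the paper's proof: both directions hinge on translating a map isomorphism into a coset bijection intertwining the generators, then invoking faithfulness of the coset actions to pass between permutation-level and group-level data, with the conjugacy condition on $U$ emerging from the stabiliser computation at the image of the base coset. Your construction $\vartheta(Ug)=\widetilde{U}\widetilde{y}\,g^\sigma$ in the ``if'' direction and your conjugation-by-$\vartheta$ description of $\sigma$ in the ``only if'' direction are exactly the mechanisms the paper employs (up to notational choices).
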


\begin{proof}
Consider the permutation representations $\pi$ and $\widetilde{\pi}$ induced by the actions of $G$ on $[G: U]$ and $\widetilde{G}$ on $[\widetilde{G}:\widetilde{U}]$, respectively. We denote $Uxa$ by $(Ux)^a$, that is, $(Ux)^a := Uxa$, for $x,a\in G$.

Suppose that there exists a group isomorphism $\sigma$ from $G$ to $\widetilde{G}$ such that for some $ \widetilde{y}\in  \widetilde{G}$,
  $$U^\sigma= \widetilde{y}^{-1}\widetilde{U}\widetilde{y},~ \rho^\sigma = \widetilde{\rho} \hbox{~and~} \tau^\sigma = \widetilde{\tau}.$$
Setting $\mu: Ux \mapsto \widetilde{U}\widetilde{y}^{-1}x^\sigma$. Then $\mu$ is a bijection from $[G:U]$ to $[\widetilde{G}:\widetilde{U}]$.
It follows that for all $x\in G$, $$(Ux)^{\mu\widetilde{\rho}}=(Ux)^{\rho\mu} \hbox{~and~} (Ux)^{\mu\widetilde{\tau}}=(Ux)^{\tau\mu}.$$
Hence $\M$ is isomorphic to $\widetilde{\M}$.

  Conversely, suppose that $\M$ is isomorphic to $\widetilde{\M}$. Then there exists a bijection $\mu$ from $[G: U]$ to $[\widetilde{G}:\widetilde{U}]$ such that for all $x\in G$, $$(Ux)^{\mu\widetilde{\rho}}=(Ux)^{\rho\mu} \hbox{~and~} (Ux)^{\mu\widetilde{\tau}}=(Ux)^{\tau\mu}.$$
  We can define a map $\sigma$ from $G$ to $\widetilde{G}$ by setting $\widetilde{g}=g^{\sigma}$ if and only if $(Ux)^{\mu\widetilde{g}}=(Ux)^{g\mu}$ for all $x\in G$. We claim that $\sigma$ is a group isomorphism. The faithfulness of the permutation representations $\pi$ and $\widetilde{\pi}$ implies that $\sigma$ is bijective.
  It is easily verified that $\sigma$ preserves the group operation, hence $\sigma$ is a group isomorphism. It follows that $\pi$ and $\widetilde{\pi}$ are permutation isomorphic.

Now assume that $\mu$ maps $U$ to $\widetilde{U}\widetilde{y}$. Then for the permutation representation $\pi$, the stabilizer of the point $U$ is $U$; and for the permutation representation $\widetilde{\pi}$, the stabilizer of the point $\widetilde{U}$ is $\widetilde{y}^{-1}\widetilde{U}\widetilde{y}$.
Since $\pi$ and $\widetilde{\pi}$ are permutation isomorphic, we must have $U^{\sigma} = \widetilde{y}^{-1}\widetilde{U}\widetilde{y}$, from which the desired conclusion follows.
\end{proof}

\begin{cor}
  Let $\M=\M(G; U, \rho,\tau)$ and $\widehat{\M}=\M(G; W, \hat{\rho},\hat{\tau})$ be two maps.
  Then $\M\cong \widehat{\M}$ if and only if there exists an automorphism $\sigma\in \Aut(G)$ such that for some $y\in  G$,
  $$U^\sigma= y^{-1}Wy,~ \rho^\sigma = \hat{\rho} \hbox{~and~} \tau^\sigma= \hat{\tau}.$$
\end{cor}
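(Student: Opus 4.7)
The plan is to derive the corollary as an immediate specialization of the preceding theorem by setting $\widetilde{G} = G$, $\widetilde{U} = W$, $\widetilde{\rho} = \hat{\rho}$, and $\widetilde{\tau} = \hat{\tau}$. Since both maps in the corollary share the same ambient group $G$, a group isomorphism $\sigma : G \to \widetilde{G} = G$ as produced by the theorem is by definition an element of $\Aut(G)$, and every $\sigma \in \Aut(G)$ is in particular a group isomorphism from $G$ to itself. Thus the data in the theorem and in the corollary match up bijectively.

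For the forward direction, I would assume $\M \cong \widehat{\M}$ and apply the ``only if'' part of the theorem to the pair $\bigl(\M(G;U,\rho,\tau),\, \M(G;W,\hat{\rho},\hat{\tau})\bigr)$. This yields a group isomorphism $\sigma : G \to G$, i.e.\ an element $\sigma \in \Aut(G)$, together with some $y \in G$ satisfying the three relations $U^\sigma = y^{-1}Wy$, $\rho^\sigma = \hat{\rho}$, and $\tau^\sigma = \hat{\tau}$, which is exactly what the corollary asserts.

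For the converse, I would start from $\sigma \in \Aut(G)$ and $y \in G$ with the stated properties. Viewing $\sigma$ as a group isomorphism from $G$ to $G$, the ``if'' part of the theorem immediately yields $\M \cong \widehat{\M}$.

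There is no real obstacle here beyond the observation that $\Aut(G)$ coincides with the set of group isomorphisms $G \to G$; hence the corollary is genuinely a corollary and the proof amounts to a single invocation of the preceding theorem. In the write-up I would present it as essentially a one-line deduction, perhaps noting explicitly the identification $\widetilde{G} = G$ so that the reader can see how the three equations and the conjugating element $y$ in the corollary correspond to the symbols $\widetilde{U}$, $\widetilde{\rho}$, $\widetilde{\tau}$, and $\widetilde{y}$ in the theorem.
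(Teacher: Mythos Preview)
Your proposal is correct and matches the paper's approach: the paper states this result as an unproved corollary immediately following the theorem, so the intended argument is precisely the specialization $\widetilde{G}=G$, $\widetilde{U}=W$, $\widetilde{\rho}=\hat{\rho}$, $\widetilde{\tau}=\hat{\tau}$ that you describe.
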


\begin{cor}
  Let $\M=\M(G; U, \rho,\tau)$ and $\widehat{\M}=\M(G; W, \rho,\tau)$ be two maps.
  Then $\M\cong \widehat{\M}$ if and only if $U$ and $W$ are conjugate in $G$.
\end{cor}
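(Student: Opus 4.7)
The plan is to derive this as a direct consequence of the preceding corollary, where the two maps share the same group $G$ and also share the rotation $\rho$ and dart-reversing involution $\tau$. The preceding corollary tells us that $\mathcal{M} \cong \widehat{\mathcal{M}}$ if and only if there exists $\sigma \in \Aut(G)$ and $y \in G$ with $U^\sigma = y^{-1}Wy$, $\rho^\sigma = \hat{\rho}$, and $\tau^\sigma = \hat{\tau}$. Since in our situation $\hat{\rho} = \rho$ and $\hat{\tau} = \tau$, the required automorphism must satisfy $\rho^\sigma = \rho$ and $\tau^\sigma = \tau$.

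For the forward direction, I would argue as follows. Assume $\mathcal{M} \cong \widehat{\mathcal{M}}$. By the preceding corollary, there exists $\sigma \in \Aut(G)$ fixing both $\rho$ and $\tau$, together with some $y \in G$ such that $U^\sigma = y^{-1}Wy$. But $G = \langle \rho, \tau \rangle$ by definition of an algebraic map, so any automorphism of $G$ that fixes the generating set $\{\rho, \tau\}$ is forced to be the identity on $G$. Consequently $U^\sigma = U$, and therefore $U = y^{-1}Wy$, i.e.\ $U$ and $W$ are conjugate in $G$.

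For the reverse direction, suppose $U = y^{-1}Wy$ for some $y \in G$. Take $\sigma = \mathrm{id}_G$, which trivially lies in $\Aut(G)$ and satisfies $\rho^\sigma = \rho$ and $\tau^\sigma = \tau$. Then $U^\sigma = U = y^{-1}Wy$, so the hypotheses of the preceding corollary are met and hence $\mathcal{M} \cong \widehat{\mathcal{M}}$.

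The proof presents no real obstacle; the only genuinely substantive point is the observation that a group automorphism which fixes every element of a generating set must be the identity, which here forces $\sigma = \mathrm{id}$ and collapses the previous corollary's criterion to plain conjugacy of $U$ and $W$ in $G$.
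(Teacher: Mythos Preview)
Your proposal is correct and is exactly the intended derivation: the paper states this corollary without proof, as an immediate specialisation of the preceding corollary with $\hat{\rho}=\rho$ and $\hat{\tau}=\tau$, and your key observation that an automorphism of $G=\langle \rho,\tau\rangle$ fixing both generators must be the identity is precisely what reduces the criterion to plain conjugacy of $U$ and $W$.
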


\begin{theorem}\label{count}
  Let $m$ be the number of conjugacy classes of core-free subgroups of $G$. Suppose that $G$ admits $n$ pairwise non-isomorphic regular maps. Then the number of non-isomorphic maps admitted by $G$ is $mn$.
\end{theorem}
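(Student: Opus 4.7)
The plan is to decouple the classification of maps admitted by $G$ into two independent layers: the choice of a generating pair $(\rho,\tau)$ up to the action of $\Aut(G)$, and the choice of a core-free subgroup $U$ up to conjugation in $G$. Let $\mathcal{P}$ denote the set of pairs $(\rho,\tau)\in G\times G$ with $\tau^2=1$ and $\langle\rho,\tau\rangle=G$, and let $\Aut(G)$ act coordinatewise on $\mathcal{P}$. Applying the first corollary above with $U=W=1$ shows that two regular maps $\M(G;1,\rho,\tau)$ and $\M(G;1,\hat\rho,\hat\tau)$ are isomorphic if and only if their pairs lie in the same $\Aut(G)$-orbit; so the hypothesis gives $n=|\mathcal{P}/\Aut(G)|$.

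Next, I would fix representatives $(\rho_1,\tau_1),\ldots,(\rho_n,\tau_n)$ of the $\Aut(G)$-orbits on $\mathcal{P}$ and reduce each map to canonical form. Given an arbitrary $\M(G;U,\rho,\tau)$, choose $\sigma\in\Aut(G)$ sending $(\rho,\tau)$ to $(\rho_i,\tau_i)$; then $U^\sigma$ is again core-free, and taking $y=1$ in the first corollary above yields $\M(G;U,\rho,\tau)\cong\M(G;U^\sigma,\rho_i,\tau_i)$. Moreover, maps whose pairs lie in distinct $\Aut(G)$-orbits cannot be isomorphic, so the total count of isomorphism classes splits as a sum, over $i=1,\ldots,n$, of the number of isomorphism classes of maps whose pair is $(\rho_i,\tau_i)$.

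The crux is a trivial-stabiliser observation. For each $i$, any $\sigma\in\Aut(G)$ satisfying $\rho_i^\sigma=\rho_i$ and $\tau_i^\sigma=\tau_i$ must be the identity, since it fixes every element of the generating set $\{\rho_i,\tau_i\}$ of $G$. Hence, whenever $\M(G;U,\rho_i,\tau_i)\cong\M(G;W,\rho_i,\tau_i)$, the first corollary forces $\sigma=\mathrm{id}$ and $U=y^{-1}Wy$ for some $y\in G$. Conversely, $G$-conjugate core-free subgroups yield isomorphic maps by the second corollary above. It follows that the isomorphism classes of maps with pair $(\rho_i,\tau_i)$ are in bijection with the $G$-conjugacy classes of core-free subgroups of $G$, yielding exactly $m$ classes. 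Summing over $i=1,\ldots,n$ gives the total $mn$. I do not anticipate a substantial obstacle: the only genuinely non-formal step is the trivial-stabiliser observation, which is simply the fact that an automorphism of $G$ is determined by its action on any generating set.
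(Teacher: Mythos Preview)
Your proposal is correct and is precisely the argument the paper intends: the theorem is stated without proof immediately after the two corollaries you invoke, and your write-up simply makes explicit how those corollaries combine---via the trivial-stabiliser observation on generating pairs---to yield the count $mn$.
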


\section{Platonic solids arising from planar graphs}

Let $G$ be a group, and $\rho_i,\tau_i\in G$ for $i\in \{1,2\}$. The pair $(\rho_1,\tau_1)$ is said to be {\it isomorphic} to $(\rho_2,\tau_2)$ if there 
exists an automorphism $\sigma$ of $G$ such that $\rho_1^\sigma=\rho_2$ and $\tau_1^\sigma=\tau_2$.

With the help of  {\sc Magma}\cite{BCP}, one can get the following three lemmas.

\begin{lem}\label{A5}
Suppose that $\Mon(A_5;U, \rho,\tau)$ is a planar graph with the monodromy group $A_5$. Then $U$ is conjugate to one of the following groups.
  \begin{enumerate}[{\rm(i)}]
\item
  $U_1=\langle (1) \rangle\cong C_1$.
\item $U_2=\langle  (1, 2)(4, 5) \rangle\cong C_2$.
\item $U_3=\langle   (1, 5, 4) \rangle\cong C_3$.
\item $U_4=\langle   (1, 2)(4, 5)
    (1, 5)(2, 4) \rangle\cong C_2\times C_2$.
\item $U_5=\langle   (1, 5, 2, 4, 3) \rangle\cong C_5$.
\item $U_6=\langle   (1, 5)(3, 4),
    (1, 2, 5) \rangle\cong S_3$.
\item $U_7=\langle   (1, 3)(4, 5),
    (1, 5, 2, 4, 3) \rangle\cong D_{10}$.
\item $U_8=\langle    (1, 5, 4),
    (1, 2)(4, 5),
    (1, 5)(2, 4) \rangle\cong A_4$.

  \end{enumerate}
And $(\rho, \tau)$ is isomorphic to one of the following pairs.
\begin{enumerate}[{\rm(i)}]
\item
  $(\rho_1, \tau_1)=( (1, 5, 4, 3, 2),
        (1, 2)(3, 4))$.
\item $(\rho_2, \tau_2)=((1, 5, 4),
        (1, 2)(3, 4))$.
\item $(\rho_3, \tau_3)=((1, 4, 2, 5, 3),
        (1, 2)(3, 4))$.
  \end{enumerate}
Moreover,  for each $i\in [8]$ and $j\in [2]$, $\Mon(A_5;U_i, \rho_j,\tau_j)$, $\Mon(A_5;U_5, \rho_3,\tau_3)$ and $\Mon(A_5;U_7, \rho_3,\tau_3)$ are planar graphs.
\end{lem}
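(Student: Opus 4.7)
My plan is to treat the lemma as a finite enumeration certified by \textsc{Magma}, so the work splits into three parts: list the admissible subgroups $U$, list the admissible generating pairs $(\rho,\tau)$ up to isomorphism, and then run a planarity test on the resulting monodromy graphs for each relevant combination.

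First, I would classify the core-free subgroups of $A_5$ up to conjugacy. Since $A_5$ is simple, every proper subgroup is automatically core-free, so it suffices to list the conjugacy classes of proper subgroups. A standard subgroup-lattice computation (which \textsc{Magma} performs via the command \texttt{SubgroupClasses}) returns exactly eight classes, with representatives of orders $1,2,3,4,5,6,10,12$ and isomorphism types $C_1,C_2,C_3,C_2\times C_2,C_5,S_3,D_{10},A_4$. These match the list $U_1,\ldots,U_8$; to get the explicit generators stated one simply picks a canonical representative in each class.

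Second, I would enumerate the pairs $(\rho,\tau)$ with $\tau^2=1$ and $\langle\rho,\tau\rangle=A_5$, modulo the action of $\mathrm{Aut}(A_5)\cong S_5$ on such pairs. All involutions of $A_5$ are conjugate, so fixing $\tau=(1,2)(3,4)$ it remains to classify $\rho$ up to $\mathrm{Stab}_{S_5}(\tau)$; the $S_5$-orbit of a pair $(\rho,\tau)$ is determined by the triple $(|\rho|,|\tau|,|\rho\tau|)$ since these are the essential map-type invariants. Only $|\rho|\in\{3,5\}$ can give $\langle\rho,\tau\rangle=A_5$ (as $|\rho|=2$ yields a dihedral subgroup), and a direct orbit computation returns precisely three classes, with triples $(5,2,3)$, $(3,2,3)$ and $(5,2,5)$, corresponding to the three pairs $(\rho_j,\tau_j)$ listed.

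Third, for each of the $8\times 3=24$ combinations I would build $\mathrm{Mon}(A_5;U_i,\rho_j,\tau_j)$ directly from Definition~\ref{mong}: compute the double cosets $U_i g\langle\rho_j\rangle$ for the vertex set, pair them through $\tau_j$ for the edge set, and record multiplicities via the counting set $D$. Planarity is then tested by comparing the Euler characteristic of the associated algebraic map to $2$, i.e.\ checking
\[
|V(\G)|-|E(\G)|+c(\rho_j\tau_j)=2,
\]
where $c(\rho_j\tau_j)$ is the number of cycles of $\rho_j\tau_j$ on $[A_5:U_i]$. This is the safest criterion because the monodromy graphs may carry loops or multiple edges, so a simple-graph Kuratowski test is insufficient. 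Running this loop in \textsc{Magma} returns exactly the $8+8+2=18$ planar instances claimed: all sixteen combinations with $j\in\{1,2\}$, together with $U_5$ and $U_7$ at $j=3$; the remaining six combinations with $(\rho_3,\tau_3)$ yield higher-genus embeddings. The main obstacle is the last point---ensuring that the planarity criterion behaves correctly on non-simple graphs and matches the intuitive notion of embeddability on the sphere; using Euler characteristic of the algebraic map avoids this pitfall and makes the verification purely mechanical.
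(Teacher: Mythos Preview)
Your plan is exactly the paper's: the lemma is simply asserted as a \textsc{Magma} computation with no written argument, and your three-step enumeration (conjugacy classes of proper subgroups of $A_5$, $\Aut(A_5)$-orbits of generating pairs $(\rho,\tau)$ with $\tau^2=1$, Euler characteristic of each resulting map) is the natural implementation. One caution: the test $|V|-|E|+c(\rho_j\tau_j)=2$ shows the \emph{map} $\M(A_5;U_i,\rho_j,\tau_j)$ is spherical and hence its underlying graph planar, which suffices for the eighteen positive claims in the ``Moreover'' clause, but higher genus for the remaining six maps does not by itself prove their underlying graphs are non-planar---your last sentence overreaches, though the lemma makes no such negative assertion, so your plan still proves everything actually stated (and note the type of $(\rho_2,\tau_2)$ is $(3,2,5)$, not $(3,2,3)$: a $(3,2,3)$ pair generates only $A_4$).
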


\begin{lem}\label{A4}
Suppose that $\Mon(A_4; U, \rho,\tau)$ is a planar graph  with the monodromy group $A_4$. Then $U$ is conjugate to one of the following groups.
  \begin{enumerate}[{\rm(i)}]
\item
  $U_1=\langle (1) \rangle\cong C_1$.
\item
  $U_2=\langle (1, 3)(2, 4) \rangle\cong C_2$.
\item
  $U_3=\langle  (2, 3, 4) \rangle\cong C_3$.

  \end{enumerate}
  And $(\rho, \tau)$ is isomorphic to $(\rho_1, \tau_1)=( (1, 3, 2),
        (1, 2)(3, 4))$.
Moreover, for each $i\in [3]$, $\Mon(A_4;U_i, \rho_1,\tau_1)$ is a planar graph. 
\end{lem}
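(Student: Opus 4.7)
The plan is to mirror the computer-assisted classification used for Lemma \ref{A5}, but carried out in the much smaller group $A_4$ where every step can be double-checked by hand. The task splits into three independent pieces: enumerate the core-free subgroups of $A_4$ up to conjugacy; enumerate the admissible pairs $(\rho,\tau)$ up to $\Aut(A_4)$; and for each surviving combination $(U_i,\rho,\tau)$ construct the graph from Definition \ref{mong} and test for planarity.

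For the first piece, $V_4 \trianglelefteq A_4$ is the unique proper nontrivial normal subgroup of $A_4$, so a subgroup $U \le A_4$ is core-free iff $V_4 \not\le U$, equivalently $|U| \le 3$. Collapsing the subgroup lattice of $A_4$ modulo conjugacy then leaves exactly the three candidates $U_1 = 1$, $U_2 = \langle (1,3)(2,4)\rangle \cong C_2$ and $U_3 = \langle (2,3,4)\rangle \cong C_3$. For the second piece, $\tau$ is an involution of $A_4$ and the case $\tau = 1$ leaves $\langle\rho,\tau\rangle$ cyclic, so $\tau$ is one of the three double transpositions; since these are conjugate in $A_4$, I may fix $\tau = (1,2)(3,4)$. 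If $\rho$ has order $1$ or $2$ then $\langle\rho,\tau\rangle \le V_4$, so $\rho$ must be a $3$-cycle. A one-line orbit count under the simultaneous-conjugation action of $C_{S_4}(\tau) \cong D_8$ on the eight $3$-cycles produces a single orbit, yielding the unique pair $(\rho_1,\tau_1) = ((1,3,2),(1,2)(3,4))$ up to $\Aut(A_4) \cong S_4$.

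For the third piece, I would build each graph $\Mon(A_4; U_i, \rho_1, \tau_1)$ using Definition \ref{mong}. By Proposition \ref{dbc} the vertex set has size $4$ when $i = 1$ and size $2$ when $i \in \{2,3\}$, so planarity is automatic: $\Mon(A_4; U_1, \rho_1, \tau_1)$ is a $3$-regular graph on $4$ vertices, which must be $K_4$ (the tetrahedron), while the other two are multigraphs on two vertices. A short {\sc Magma} routine that implements the double-coset construction and calls \texttt{IsPlanar} confirms these facts and keeps the proof uniform with Lemma \ref{A5}. The only step that is not essentially trivial by hand is the orbit enumeration in the second piece, and this is where I would expect any bookkeeping error to arise; the safeguard is to iterate in {\sc Magma} over every involution $\tau \in A_4$ and every $\rho \in A_4$ with $\langle\rho,\tau\rangle = A_4$ and then quotient by $\Aut(A_4)$, cross-checking that the resulting orbit count agrees with the hand calculation.
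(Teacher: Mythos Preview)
Your proposal is correct and follows the same exhaustive-enumeration strategy as the paper, which offers no argument beyond the sentence ``With the help of {\sc Magma}, one can get the following three lemmas.'' Your version is in fact more informative: the hand arguments that (a) the core-free condition in $A_4$ is equivalent to $|U|\le 3$, (b) any $3$-cycle together with any double transposition generates $A_4$, and (c) every resulting monodromy graph has at most four vertices and is therefore automatically planar, make the lemma essentially self-contained, with {\sc Magma} serving only as a cross-check rather than the sole justification.
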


\begin{lem}\label{S4}
Suppose that $\Mon(S_4; U, \rho,\tau)$ is a planar graph  with the monodromy group $S_4$. Then $U$ is conjugate to one of the following groups.
  \begin{enumerate}[{\rm(i)}]
\item
  $U_1=\langle (1) \rangle\cong C_1$.
\item
  $U_2=\langle (1, 4)(2, 3) \rangle\cong C_2$.
\item
  $U_3=\langle  (3, 4) \rangle\cong C_2$.
\item
  $U_4=\langle  (2, 3, 4) \rangle\cong C_3$.
\item
  $U_5=\langle   (1, 4, 2, 3) \rangle\cong C_4$.
\item
  $U_6=\langle   (3, 4),
    (1, 2)(3, 4) \rangle\cong C_2\times C_2$.
\item
  $U_7=\langle  (3, 4),
    (2, 3, 4) \rangle\cong S_3$.

  \end{enumerate}
And $(\rho, \tau)$ is isomorphic to one of the following pairs.
\begin{enumerate}[{\rm(i)}]
\item
  $(\rho_1, \tau_1)=(  (1, 2, 3, 4),
        (2, 3))$.
\item $(\rho_2, \tau_2)=( (1, 2, 4),
        (2, 3))$.
  \end{enumerate}
Moreover, for each $i\in [7]$ and $j\in [2]$, $\Mon(S_4;U_i, \rho_j,\tau_j)$ is a planar graph. 
\end{lem}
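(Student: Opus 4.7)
The plan is to proceed by direct classification, paralleling the approach taken for Lemmas \ref{A5} and \ref{A4}. Because $S_4$ has order only $24$ and a fully understood subgroup lattice, the enumeration of core-free subgroups together with the generating pairs $(\rho,\tau)$ with $\tau^2=1$ reduces to a finite computation, executed in Magma.

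For the subgroup side, I would first list the conjugacy classes of subgroups of $S_4$: the trivial group, two classes of $C_2$ (generated by a transposition, or by a double transposition), a class of $C_3$, a class of $C_4$, two classes of $V_4$ (the normal subgroup $\{1,(1,2)(3,4),(1,3)(2,4),(1,4)(2,3)\}$ sitting inside $A_4$, and the non-normal $\langle (1,2),(3,4)\rangle$), a class of $S_3$, a class of $D_4$, $A_4$, and $S_4$ itself. A subgroup is core-free in $S_4$ exactly when it contains no nontrivial normal subgroup, that is, when it does not contain the normal Klein four-group. Discarding the normal $V_4$ itself, $D_4$, $A_4$, and $S_4$ leaves precisely the seven representatives $U_1,\ldots,U_7$ in the statement.

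For the pair side, I would enumerate all $(\rho,\tau)\in S_4\times S_4$ with $\tau^2=1$ and $\langle\rho,\tau\rangle=S_4$. Since $S_4$ is a complete group (that is, $\Aut(S_4)=\mathrm{Inn}(S_4)$), isomorphism of pairs coincides with $S_4$-conjugacy, so the generating pairs split into only a few conjugacy classes. Restricting to those whose associated monodromy graph is planar yields precisely the two representatives $((1,2,3,4),(2,3))$ and $((1,2,4),(2,3))$. The final step is to verify planarity of each of the $14$ graphs $\Mon(S_4; U_i,\rho_j,\tau_j)$: build the vertex set as the double cosets $U_i g\langle \rho_j\rangle$, the edge set via Lemma \ref{epro}, and the face set as the orbits of $\langle \rho_j\tau_j\rangle$ on $[S_4:U_i]$, then check Euler's formula $V-E+F=2$. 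The principal obstacle is not any single theoretical point but the bookkeeping required to keep the isomorphism classes of pairs straight without omission or duplication---precisely where Magma is essential. Geometrically the surviving maps are the sphere embeddings of the cube and the octahedron together with their quotients by the various core-free subgroups, which gives a reassuring sanity check on the computation.
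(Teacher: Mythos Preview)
Your plan coincides with the paper's, which simply records that Lemmas~\ref{A5}--\ref{S4} are obtained ``with the help of {\sc Magma}''; you have made the underlying computation explicit, and the subgroup-lattice analysis singling out the seven core-free classes is correct. One small clarification: the phrase ``restricting to those whose associated monodromy graph is planar'' is superfluous here, since $S_4$ has exactly two $\Aut(S_4)$-classes of generating pairs $(\rho,\tau)$ with $\tau^2=1$ altogether (the cube and the octahedron), both already spherical, so the list of pairs in the lemma is simply the complete list and planarity imposes no further restriction. Note also that your Euler-characteristic test verifies that the \emph{map} $\mathcal{M}(S_4;U_i,\rho_j,\tau_j)$ lies on the sphere, which is a priori stronger than planarity of its underlying graph; it succeeds in all $14$ cases because every orientable quotient of a spherical map is again spherical by Riemann--Hurwitz.
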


\vskip 0.5cm
In the following ten figures, we show that where there is a graph, there is a group, an arc-transitive graph, and an orientable regular map. That is, Figure $i+1$ can be obtained from Figure $i$ for $i\in \{1,3,5,7,9\}$.

\vskip 1cm
\begin{figure}[H]
\centering
\resizebox{0.7\textwidth}{!}{%
\begin{circuitikz}
\tikzstyle{every node}=[font=\LARGE]
\draw  (9.25,13.5) arc (180:0:2cm and 1.5cm);
\draw  (6.75,13.5) arc (180:360:3.25cm and 2cm);
\node at (9.25,13.5) [circ] {};
\node at (6.75,13.5) [circ] {};
\draw  (10.5,13.5) ellipse (1.25cm and 0.75cm);
\draw  (5.5,13.5) ellipse (1.25cm and 0.75cm);
\node at (10.75,13.5) [circ] {};
\node at (5.25,13.5) [circ] {};
\draw (5.25,13.5) to[short] (6.75,13.5);
\draw (6.75,13.5) to[short] (9.25,13.5);
\draw (9.25,13.5) to[short] (10.75,13.5);
\end{circuitikz}
}%
\caption{The monodromy graph Mon$(A_5;U_5,\rho_1,\tau_1)$ in Lemma \ref{A5}.}
\label{fig:my_label}
\end{figure}
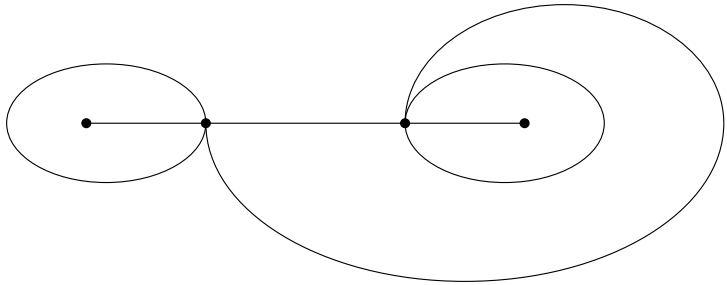

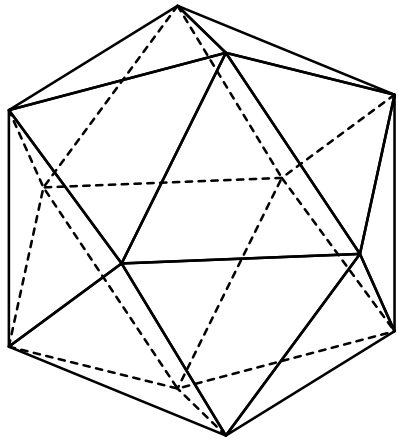
\begin{figure}[H]

\centering
\scalebox{0.8}{
\begin{tikzpicture}[scale=2, line cap=round,line join=round,>=latex,x=1.0cm,y=1.0cm]

\coordinate (P0) at (0.19866933079506122, 1.3910718630305983);
\coordinate (P1) at (0.19866933079506122, -1.7804902053392486);
\coordinate (P2) at (-0.19866933079506122, 1.7804902053392486);
\coordinate (P3) at (-0.19866933079506122, -1.3910718630305983);
\coordinate (P4) at (1.301520307589847, -0.27557655385046137);
\coordinate (P5) at (0.6586128480926363, 0.35451555984757627);
\coordinate (P6) at (-0.6586128480926363, -0.35451555984757627);
\coordinate (P7) at (-1.301520307589847, 0.27557655385046137);
\coordinate (P8) at (1.5857810341849234, 1.0439295752119735);
\coordinate (P9) at (-1.5857810341849234, 0.9162035804705098);
\coordinate (P10) at (1.5857810341849234, -0.9162035804705098);
\coordinate (P11) at (-1.5857810341849234, -1.0439295752119735);

\fill[color=white] (-2,-2) rectangle (2,2);

\draw[very thick, black] (P0) -- (P2) -- (P8) -- cycle;
\draw[very thick, black] (P0) -- (P2) -- (P9) -- cycle;
\draw[very thick, black] (P0) -- (P8) -- (P4) -- cycle;
\draw[very thick, black] (P0) -- (P9) -- (P6) -- cycle;
\draw[very thick, black] (P0) -- (P4) -- (P6) -- cycle;

\draw[very thick, black] (P9) -- (P11) -- (P6) -- cycle;
\draw[very thick, black] (P1) -- (P11) -- (P6) -- cycle;
\draw[very thick, black] (P1) -- (P4) -- (P6) -- cycle;

\draw[very thick, black] (P4) -- (P10) -- (P8) -- cycle;
\draw[very thick, black] (P4) -- (P10) -- (P1) -- cycle;

\draw[very thick, dashed, black] (P1) -- (P3);
\draw[very thick, dashed, black] (P11) -- (P3) -- (P10);
\draw[very thick, dashed, black] (P11) -- (P7) -- (P9);
\draw[very thick, dashed, black] (P2) -- (P7) -- (P3);
\draw[very thick, dashed, black] (P5) -- (P7);
\draw[very thick, dashed, black] (P2) -- (P5) -- (P3);
\draw[very thick, dashed, black] (P8) -- (P5) -- (P10);

\end{tikzpicture} }

\caption{A regular icosahedron $\M(A_5;U_1,\rho_1,\tau_1)$ with the underlying graph Mon$(A_5;U_1,\rho_1,\tau_1)$ in Lemma \ref{A5}.}
\label{fig:ico}
\end{figure}

\begin{figure}[H]
\centering
\scalebox{0.95}{
\resizebox{0.8\textwidth}{!}{%
\begin{circuitikz}
\tikzstyle{every node}=[font=\LARGE]
\node at (10,12.25) [circ] {};
\node at (12.5,12.25) [circ] {};
\node at (15,12.25) [circ] {};
\node at (7.5,12.25) [circ] {};
\draw  (6.75,12.25) ellipse (0.75cm and 0.5cm);
\draw  (15.75,12.25) ellipse (0.75cm and 0.5cm);
\draw (7.5,12.25) to[short] (15,12.25);
\draw  (10,12.25) arc (0:180:2.5cm and 1.5cm);
\draw  (5,12.25) arc (180:360:3.75cm and 2cm);
\end{circuitikz}
}%
}
\caption{The monodromy graph Mon$(A_5;U_5,\rho_2,\tau_2)$ in Lemma \ref{A5}.}
\label{fig:my_label}
\end{figure}

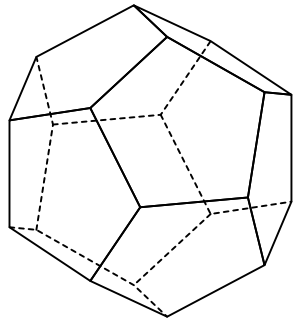
\begin{figure}[H]
\centering
\scalebox{0.6}{
\begin{tikzpicture}[scale=2, line cap=round,line join=round,>=latex,x=1.0cm,y=1.0cm]

\coordinate (P0) at (1.2508566957869456, 0.7603474449732492);
\coordinate (P1) at (1.2508566957869456, -1.1503255332779627);
\coordinate (P2) at (0.6598162824642664, 1.3249899183682845);
\coordinate (P3) at (0.6598162824642664, -0.5856830598829275);
\coordinate (P4) at (-0.6598162824642664, 0.5856830598829275);
\coordinate (P5) at (-0.6598162824642664, -1.3249899183682845);
\coordinate (P6) at (-1.2508566957869456, 1.1503255332779627);
\coordinate (P7) at (-1.2508566957869456, -0.7603474449732492);
\coordinate (P8) at (0.18264153207910092, 1.3712827900732547);
\coordinate (P9) at (0.18264153207910092, -1.7202510301231948);
\coordinate (P10) at (-0.18264153207910092, 1.7202510301231948);
\coordinate (P11) at (-0.18264153207910092, -1.3712827900732547);
\coordinate (P12) at (1.0685921597130592, -0.40283109341752804);
\coordinate (P13) at (0.11226868223217829, 0.5107796200274473);
\coordinate (P14) at (-0.11226868223217829, -0.5107796200274473);
\coordinate (P15) at (-1.0685921597130592, 0.40283109341752804);
\coordinate (P16) at (1.5457669100982248, 0.7317368768227392);
\coordinate (P17) at (-1.5457669100982248, 0.44912396512249836);
\coordinate (P18) at (1.5457669100982248, -0.44912396512249836);
\coordinate (P19) at (-1.5457669100982248, -0.7317368768227392);

\fill[color=white] (-2,-2) rectangle (2,2);

\draw[very thick, black] (P0) -- (P8) -- (P10) -- (P2) -- (P16) -- cycle;

\draw[very thick, black] (P4) -- (P8) -- (P10) -- (P6) -- (P17) -- cycle;

\draw[very thick, black] (P0) -- (P8) -- (P4) -- (P14) -- (P12) -- cycle;

\draw[very thick, black] (P16) -- (P0) -- (P12) -- (P1) -- (P18) -- cycle;

\draw[very thick, black] (P1) -- (P12) -- (P14) -- (P5) -- (P9) -- cycle;

\draw[very thick, black] (P4) -- (P14) -- (P5) -- (P19) -- (P17) -- cycle;

\draw[very thick, dashed, black] (P19) -- (P7) -- (P11) -- (P9);
\draw[very thick, dashed, black] (P7) -- (P15) -- (P6);
\draw[very thick, dashed, black] (P15) -- (P13) -- (P2);
\draw[very thick, dashed, black] (P13) -- (P3) -- (P18);
\draw[very thick, dashed, black] (P3) -- (P11);

\end{tikzpicture} }
\caption{A regular dodecahedron $\M(A_5;U_1,\rho_2,\tau_2)$ with the underlying graph Mon$(A_5;U_1,\rho_2,\tau_2)$ in Lemma \ref{A5}.}
\label{fig:dod}
\end{figure}

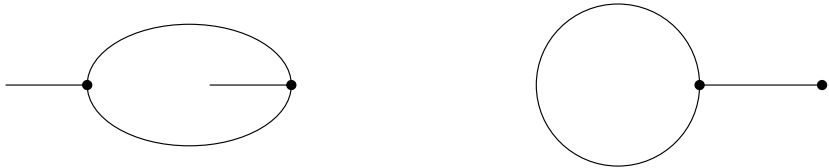
\begin{figure}[H]
\centering
\resizebox{0.8\textwidth}{!}{%
\begin{circuitikz}
\tikzstyle{every node}=[font=\LARGE]
\draw  (12.75,13.5) circle (1cm);
\draw  (7.5,13.5) ellipse (1.25cm and 0.75cm);
\node at (13.75,13.5) [circ] {};
\node at (8.75,13.5) [circ] {};
\node at (6.25,13.5) [circ] {};
\draw (7.75,13.5) to[short] (8.75,13.5);
\draw (5.25,13.5) to[short] (6.25,13.5);
\draw (13.75,13.5) to[short] (15.25,13.5);
\node at (15.25,13.5) [circ] {};
\end{circuitikz}
}%
\caption{The monodromy graphs Mon$(A_4;U_2,\rho_1,\tau_1)$ (left) with two free edges and Mon$(A_4;U_3,\rho_1,\tau_1)$ (right) in Lemma \ref{A4}.}
\label{fig:my_label}
\end{figure}

\begin{figure}[H]
\centering
\scalebox{0.8}{
\begin{tikzpicture}[scale=2, line cap=round,line join=round,>=latex,x=1.0cm,y=1.0cm]

\coordinate (P0) at (-1.1042758977624227, 1.3096936302264104);
\coordinate (P1) at (0.8835014100843277, -0.3031514993551603);
\coordinate (P2) at (-0.8835014100843277, -1.4214862452202075);
\coordinate (P3) at (1.1042758977624227, 0.41494411434895745);

\fill[color=white] (-2,-2) rectangle (2,2);

\draw[very thick, black] (P0) -- (P1) -- (P2) -- cycle;

\draw[very thick, black] (P0) -- (P3) -- (P1) -- cycle;

\draw[very thick, dashed, black] (P2) -- (P3);

\end{tikzpicture} }

\caption{A regular tetrahedron $\M(A_4;U_1,\rho_1,\tau_1)$ with the underlying graph Mon$(A_4;U_1,\rho_1,\tau_1)$ in Lemma \ref{A4}.}
\label{fig:tet}
\end{figure}

\begin{figure}[H]
\centering
\scalebox{0.45}{
\resizebox{0.8\textwidth}{!}{%
\begin{circuitikz}
\tikzstyle{every node}=[font=\LARGE]
\draw  (10,13.5) circle (1.75cm);
\node at (8.25,13.5) [circ] {};
\node at (10,15.25) [circ] {};
\node at (10,11.75) [circ] {};
\node at (10.75,13.5) [circ] {};
\draw (10,15.25) to[short] (10,11.75);
\draw [short] (10,15.25) -- (10.75,13.5);
\draw [short] (10.75,13.5) -- (10,11.75);
\end{circuitikz}
}%
}
\caption{The monodromy graph Mon$(S_4;U_2,\rho_1,\tau_1)$ in Lemma \ref{S4}.}
\label{fig:my_label}
\end{figure}

\begin{figure}[H]
\centering
\begin{tikzpicture}[scale=2, line cap=round,line join=round,>=latex,x=1.0cm,y=1.0cm]

\coordinate (P0) at (-0.27516333805159693, -0.21095489306059886);
\coordinate (P1) at (0.27516333805159693, 0.21095489306059886);
\coordinate (P2) at (-0.9613974918795568, 0.06037778654840108);
\coordinate (P3) at (0.9613974918795568, -0.06037778654840108);
\coordinate (P4) at (-0.0, -0.9756293127952373);
\coordinate (P5) at (-0.0, 0.9756293127952373);

\fill[color=white] (-1.2,-1.2) rectangle (1.2,1.2);

\draw[very thick, black] (P4) -- (P2) -- (P0) -- cycle;

\draw[very thick, black] (P4) -- (P3) -- (P0) -- cycle;

\draw[very thick, black] (P5) -- (P2) -- (P0) -- cycle;

\draw[very thick, black] (P5) -- (P3) -- (P0) -- cycle;

\draw[very thick, dashed, black] (P1) -- (P2);
\draw[very thick, dashed, black] (P1) -- (P3);
\draw[very thick, dashed, black] (P1) -- (P4);
\draw[very thick, dashed, black] (P1) -- (P5);

\end{tikzpicture}

\caption{A regular octahedron $\M(S_4;U_1,\rho_1,\tau_1)$ with the underlying graph Mon$(S_4;U_1,\rho_1,\tau_1)$ in Lemma \ref{S4}.}
\label{fig:oct}
\end{figure}
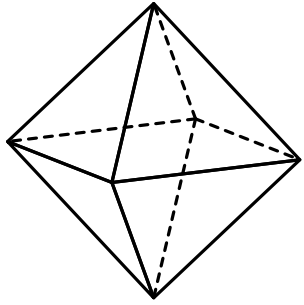

\begin{figure}[H]
\centering
\scalebox{0.45}{
\resizebox{0.8\textwidth}{!}{%
\begin{circuitikz}
\tikzstyle{every node}=[font=\LARGE]
\draw  (7.5,13.5) ellipse (0.5cm and 1.25cm);
\draw  (10,13.5) ellipse (0.5cm and 1.25cm);
\node at (7.5,14.75) [circ] {};
\node at (10,14.75) [circ] {};
\node at (10,12.25) [circ] {};
\node at (7.5,12.25) [circ] {};
\draw (7.5,14.75) to[short] (10,14.75);
\draw (7.5,12.25) to[short] (10,12.25);
\end{circuitikz}
}%

}
\caption{The monodromy graph Mon$(S_4;U_2,\rho_2,\tau_2)$ in Lemma \ref{S4}.}
\label{fig:my_label}
\end{figure}

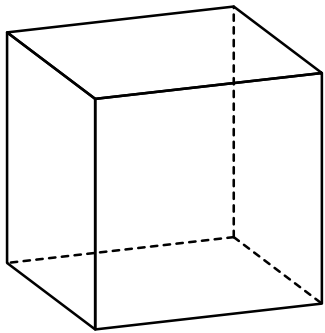
\begin{figure}[H]
\centering
\scalebox{0.8}{
\begin{tikzpicture}[scale=2, line cap=round, line join=round, >=latex, x=1.0cm, y=1.0cm]

\coordinate (P0) at (-0.5696813314905527, 0.5728160673358398);
\coordinate (P1) at (-0.5696813314905527, -1.3378569109153722);
\coordinate (P2) at (1.2943968404439, 0.7869841443124108);
\coordinate (P3) at (1.2943968404439, -1.1236888339388011);
\coordinate (P4) at (-1.2943968404439, 1.1236888339388011);
\coordinate (P5) at (-1.2943968404439, -0.7869841443124108);
\coordinate (P6) at (0.5696813314905527, 1.3378569109153722);
\coordinate (P7) at (0.5696813314905527, -0.5728160673358398);

\fill[color=white] (-2,-2) rectangle (2,2);

\draw[very thick] (P0) -- (P1) -- (P3) -- (P2) -- cycle;
\draw[very thick] (P0) -- (P1) -- (P5) -- (P4) -- cycle;
\draw[very thick] (P0) -- (P2) -- (P6) -- (P4) -- cycle;

\draw[very thick, dashed] (P7) -- (P6);
\draw[very thick, dashed] (P7) -- (P5);
\draw[very thick, dashed] (P7) -- (P3);

\end{tikzpicture} }
\caption{A regular cube $\M(S_4;U_1,\rho_2,\tau_2)$ with the underlying graph Mon$(S_4;U_1,\rho_2,\tau_2)$ in Lemma \ref{S4}.}
\label{fig:cub}
\end{figure}

Based on Theorem \ref{count} and Lemma \ref{A5},\ref{A4}, \ref{S4}, we can get the following theorem.

\begin{theorem}
  There are $24$, $3$, and $14$ non-isomorphic maps with monodromy groups $A_5$, $A_4$, and $S_4$, respectively.
\end{theorem}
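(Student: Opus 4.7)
The plan is to apply Theorem~\ref{count} separately to each of the three groups $A_5$, $A_4$, $S_4$. That theorem reduces the count of non-isomorphic algebraic maps with monodromy group $G$ to the product $m\cdot n$, where $m$ is the number of conjugacy classes of core-free subgroups of $G$ and $n$ is the number of non-isomorphic regular maps admitted by $G$. By the two corollaries preceding Theorem~\ref{count}, $n$ equals the number of $\Aut(G)$-orbits on generating involutory pairs $(\rho,\tau)$ (those with $\tau^{2}=1$ and $\langle\rho,\tau\rangle=G$), so both factors are directly tabulated in Lemmas~\ref{A5}, \ref{A4}, \ref{S4}.

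First I would extract $m$ from each lemma. Since $A_5$ is simple, every proper subgroup is core-free, so the eight entries $U_1,\dots,U_8$ of Lemma~\ref{A5} exhaust the core-free conjugacy classes and $m=8$. In $A_4$ the normal Klein four-group is the only proper subgroup that fails to be core-free, leaving the three entries $U_1,U_2,U_3$ of Lemma~\ref{A4}, so $m=3$. In $S_4$, excluding the normal subgroups $V_4$ and $A_4$ leaves exactly the seven entries $U_1,\dots,U_7$ of Lemma~\ref{S4}, giving $m=7$. Next I would read off $n$ as the number of listed isomorphism classes of generating pairs: three in Lemma~\ref{A5} (realising the icosahedral, dodecahedral and great-dodecahedral regular maps), one in Lemma~\ref{A4} (the tetrahedron), and two in Lemma~\ref{S4} (the cube and the octahedron). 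Thus $n=3,1,2$ respectively, and the products $mn=24,\,3,\,14$ give the asserted counts.

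The only point requiring care---and the main (mild) obstacle---is that the three lemmas are phrased ``Suppose $\Mon(G;U,\rho,\tau)$ is planar,'' whereas Theorem~\ref{count} demands \emph{all} core-free subgroups and \emph{all} regular maps, planar or not. I would therefore verify that every $\Aut(G)$-orbit of generating involutory pairs is actually represented among the listed $(\rho_j,\tau_j)$, even when the corresponding regular map $\Mon(G;1,\rho_j,\tau_j)$ is not planar (as happens for $(\rho_3,\tau_3)$ in $A_5$, which yields a genus-$4$ map). This is the content of the \textsc{Magma} enumeration underlying the lemmas, which iterates over all conjugacy classes of core-free subgroups and all $\Aut(G)$-orbits of generating pairs with $\tau^2=1$ without any planarity restriction, and only records planarity in the closing clauses. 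Once that enumeration is granted, Theorem~\ref{count} yields the three counts at once.
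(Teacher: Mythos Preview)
Your approach is exactly the paper's: invoke Theorem~\ref{count} and read off the factors $m$ and $n$ from Lemmas~\ref{A5}--\ref{S4}. The paper's own ``proof'' is the single sentence preceding the theorem, so your write-up is in fact more explicit than what the paper provides, and your discussion of the planarity phrasing is a reasonable gloss on why the lemma data suffice.

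One small slip: in the $S_4$ case you justify $m=7$ by ``excluding the normal subgroups $V_4$ and $A_4$.'' That reasoning would leave eight conjugacy classes, not seven, because $S_4$ has ten conjugacy classes of proper subgroups. The class you are forgetting to exclude is $D_8$: it is not normal, but it contains the normal Klein four-group, so its core is nontrivial. Once $D_8$ is removed as well, you do get the seven core-free classes $U_1,\dots,U_7$ listed in Lemma~\ref{S4}. This does not affect the final count, but the stated justification should be corrected.
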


\section*{Data Availability}
 No data was used in the preparation of this manuscript.

\section*{Conflict of interest}
The authors declared no potential conflicts of interest with respect to the research, authorship, and publication of this article.

\end{document}